\pgfplotsset{compat=1.15}
\theoremstyle{plain}
\newtheorem{Theorem}{Theorem}   [section]
\newtheorem{Lemma}[Theorem]   {Lemma}
\newtheorem{Ruleset}[Theorem] {Ruleset}
\newtheorem{Open}{Open}
\newcommand{\craig}[1]{\textcolor{purple}{#1}}
\newcommand{\kyle}[1]{\textcolor{green}{#1}}
\newcommand{\Pclass}{\mathcal{P}}
\newcommand{\Nclass}{\mathcal{N}}
\newcommand{\fc}{\textsc{flag coloring}}
\newcommand{\at}{\textsc{avoid true}}
\newcommand{\ourmod}{\text{ mod }}
\newcommand{\cclass}[1]{\ensuremath{\mathord{\rm #1}}}
\newcommand{\outcomeClass}[1]{\ensuremath{\mathcal{#1}}}
\newcommand{\outP}{\outcomeClass{P}}
\newcommand{\outN}{\outcomeClass{N}}
\newcommand{\impartialGameSet}[1]{\ensuremath{{}^{*\!}\{\ #1\ \} }}
\begin{document}

\title{Vexing vexillological logic}
\author{Kyle Burke $^1$, Craig Tennenhouse $^2$}
\date{
$^1$Florida Southern College, Lakeland, FL 33801, USA\\{\tt kburke@flsouthern.edu}\\
$^2$University of New England, Biddeford, ME 04005, USA\\{\tt ctennenhouse@une.edu}
}

\maketitle

\begin{abstract}
We define a new impartial combinatorial game, \fc,  based on flood filling. We then generalize to a graph game, and find values for many positions on two colors.  We demonstrate that the generalized game is PSPACE-complete for two colors or more via a reduction from \at, determine the outcome classes of games based on real-world flags, and discuss remaining open problems.
\end{abstract}

\section{Introduction}\label{sec:intro}

In Sect. \ref{sec:intro} we present some background and definitions in Combinatorial Game Theory. We then introduce a new impartial game defined on a colored plane region (like a flag) and extend it to graphs, determining results on some families of graphs in Sect. \ref{sec:results}. In Sect. \ref{sec:flags} we examine optimal play on flags of some real-world regions. We prove in Sect. \ref{sec:complexity} that the game played on general graphs is in PSPACE. The paper concludes in Sect. \ref{sec:conclusion} with a list of unanswered questions about the game.

\emph{Combinatorial Game Theory} is the study of two-person strategy games with perfect information. A \emph{ruleset} is the description of play, including permitted moves, and a game played under \emph{normal play} is won by the last player to make a permitted move. A \emph{game} or \emph{position} is one state, and the \emph{options} of a game are those positions that are reachable by the next player. 

A combinatorial game is \emph{impartial} if both players have access to the same options at every position. Following the method of Grundy \cite{Grundy:1939} and Sprague \cite{Sprague:1936}, each position is assigned a non-negative integer value equal to the minimum non-negative integer value excluded (\emph{mex}) from the values of its options. This integer is called a \emph{nimber}, written $\ast n$, and any game with no options is assigned the nimber $\ast 0$ or simply $0$. In this paper we use the notation $\impartialGameSet{\ast n_1, \ast n_2, \ldots ,\ast n_k}$ to denote the value of a position with option values $\ast n_1, \ast n_2, \ldots ,\ast n_k$. Thus, $\impartialGameSet{\ast n_1, \ast n_2, \ldots, \ast n_k} = \ast \left(\text{mex}(n_1, n_2, \ldots, n_k) \right)$.  
A position is in \outP\ if it has value $0$ and in \outN\ otherwise. Positions in \outP\ do not have a winning strategy for the next player, while positions in \outN\ do.  By convention, we will use a game position $G$ with value $\ast n$ and the position $G$ itself interchangeably, so $G = \ast n$. For other conventions and further background see \cite{LessonsInPlay:2007, WinningWays:2001}.

There have been a number of mathematical games and puzzles based on the mechanism of a \emph{flood fill} algorithm, i.e. wherein a player or players change the color of a contiguous region in the plane. \textsc{The Honey-Bee Game} is one such game, a partisan game played on a hexagonal board, and whose computational complexity is analyzed in \cite{fleischer2012algorithmic}. 
\textsc{Flood-It} is a single player flood-filling 
puzzle analyzed in \cite{vu2019extremal} 
and \cite{meeks2013complexity}
. Further analysis of the complexity of \textsc{Flood-It} can be found in \cite{fellows2018survey}
. Our ruleset differs from these in that it is a two-player impartial game.

\begin{Ruleset}
\fc\ is a combinatorial game played on a colored simple graph $G$. A turn consists of choosing a vertex $v$ of color $c$ and a color $c'\neq c$, the color of a neighboring vertex. The vertex $v$ and all vertices in the same connected color $c$ component are then colored $c'$. 
\end{Ruleset}

We study playing \fc\ under the Normal Play convention, so that when no moves remain the game is over and the last player to move wins. We consider the cases where $G$ is connected, and thus the game ends when only one color remains. In practice, two-color \fc~can be played on a colored two-dimensional field, wherein players take turns reducing the number of distinctly-colored regions of the field. Each initial colored region can be mapped to a single vertex of a graph $G$, with neighboring regions associated with adjacent vertices in $G$.

In this manuscript we address the generalized game, played on a simple graph. Note that a single move can be realized by a graph with fewer vertices than the initial graph, wherein any edge adjoining two vertices of the same color is contracted and the process is repeated until the graph is properly colored. Unless stated otherwise, assume that any graph $G$ associated with a game of \fc~is such a properly colored graph. Similarly, unless otherwise noted, assume that every game is played on a properly two-colored bipartite graph. By $u\sim v$ we mean that the vertices $u$ and $v$ are adjacent.

\section{Results}
\label{sec:results}

In this section, we investigate many graph families and find the nimber values for two-color \fc\ positions on these graphs.

Let $S_i$ be the star graph with one central vertex and $i$ vertices adjacent only to that center. Note that there is no consensus on this notation, and some other sources denote this graph by $S_{i+1}$.

\begin{Theorem}\label{thm:stars}
\fc\ on $S_i = 
  \begin{cases}
    0, & \text{if } i = 0\\
    \ast, & \text{if } i \mod 2 = 1\\
    \ast 2, & \text{if } i \geq 2 \text{ and } i \mod 2 = 0
  \end{cases}$
\end{Theorem}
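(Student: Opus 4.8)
The plan is to establish a one-step recurrence in $i$ and then close by induction on the parity of $i$. First I would fix the proper two-coloring of $S_i$: the center $z$ is black and all $i$ leaves are white (the opposite coloring gives an isomorphic game, so the choice is harmless). Since \fc\ is impartial and played under normal play, it suffices to list the options of $S_i$ and take the mex of their values.

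The key step is to enumerate the moves available from $S_i$ for $i \geq 1$. Every vertex of $S_i$ is adjacent only to vertices of the opposite color, so a move is specified by the single vertex being recolored. If the center $z$ is recolored to white, then because the black component of $z$ is $\{z\}$ alone, the whole board becomes white and contracts to a single vertex, a position of value $0$. If a leaf $\ell$ is recolored to black, then because the leaves are pairwise non-adjacent each forms its own white component, so only $\ell$ changes; contracting the edge $z\ell$ merges $\ell$ into the center and yields exactly $S_{i-1}$. By the symmetry of the leaves, every leaf-move gives $S_{i-1}$. Hence $S_i = \impartialGameSet{0, S_{i-1}}$ for every $i \geq 1$.

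It then remains to solve this recurrence by induction. The base case $S_0 = 0$ holds since $S_0$ is a single isolated vertex with no legal move, and then $S_1 = \impartialGameSet{0, 0} = \ast$. For the inductive step, suppose $i \geq 2$. If $i$ is even then $i-1$ is odd, so $S_{i-1} = \ast$ by the inductive hypothesis and $S_i = \impartialGameSet{0, \ast} = \ast 2$; if $i$ is odd then $i-1 \geq 2$ is even, so $S_{i-1} = \ast 2$ and $S_i = \impartialGameSet{0, \ast 2} = \ast$. These are precisely the three cases in the statement.

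I do not anticipate a genuine obstacle; the care needed is entirely in the bookkeeping of two points. First, I must confirm that the recolored color-component is a single vertex in each case — this is exactly where the structure of $S_i$ (the center is the unique black vertex and the leaves form an independent set) is used. Second, I must verify that after the forced contraction a leaf-move returns the star $S_{i-1}$ rather than some other graph, which is immediate because contracting one leaf-edge of a star reduces the leaf count by one. Once these are in place, the mex computations and the parity induction are routine.
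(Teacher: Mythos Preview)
Your proposal is correct and follows essentially the same approach as the paper: both identify that $S_i$ has exactly two functionally distinct options (a move on the center to $0$, and a move on any leaf to $S_{i-1}$), then resolve the resulting recurrence by induction on $i$. Your write-up is simply more explicit about why each move yields the claimed option and about the mex computations, but the underlying argument is identical.
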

\begin{proof}
Each position has two functionally-different options: a move to zero by playing on the center or to $S_{i-1}$ by playing on one of the leaves.  Thus, only the edgeless case ($S_0$) has no options and is zero; the other positions alternate $\ast$ and $\ast 2$ as $i$ increases.
\end{proof}

Let $T_{p,d}$ be the two-colored graph consisting of a single vertex $v$ adjacent to $p$-many pendants, and with $d$-many triples of vertices $\{u_i, w_i, x_i\}$, and the edges $(u_i, x_i), (w_i,x_i), (u_i, v), $ and $(w_i, v)$ (Fig.~\ref{fig:diamonds}).

\begin{figure}[h!]
\centering
    \begin{tikzpicture}[node distance = 1cm, every node/.style={draw=black,circle}]

      \node[draw] (v) [] {$v$};
      \node[draw, fill=black] (p1) [left of=v, label=left:{$p_1$}] {};
      \node[draw, fill=black] (p2) [below left of=v, label=left:{$p_2$}] {};
      \node[draw, fill=black] (p3) [below of=v, label=below:{$p_3$}] {};
      \node[draw, fill=black] (w1) [above right of=v, label=right:{$w_1$}] {};
      \node[draw, fill=black] (u1) [above of=v, label=left:{$u_1$}] {};
      \node[draw] (x1) [above right of=u1, label=right:{$x_1$}] {};      
      \node[draw, fill=black] (w2) [below right of=v, label=below right:{$w_2$}] {};
      \node[draw, fill=black] (u2) [right of=v, label=right:{$u_2$}] {};
      \node[draw] (x2) [below right of=u2, label=right:{$x_2$}] {};

      \path[-]
        (v) edge (p1)
        (v) edge (p2)
        (v) edge (p3)
        (v) edge (u1)
        (v) edge (w1)
        (u1) edge (x1)
        (w1) edge (x1)
        (v) edge (u2)
        (v) edge (w2)
        (u2) edge (x2)
        (w2) edge (x2)        
        
      ;
    \end{tikzpicture}
\caption{The \fc~ position $T_{3,2}$; three pendants and two diamonds.}
\label{fig:diamonds}
\end{figure}
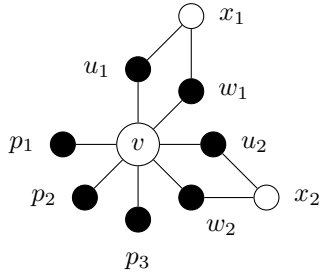

\begin{Theorem}\label{thm:diamonds}
The value of a \fc\ position on $T_{p,d}$ is given by Table \ref{table:Ts}.
\end{Theorem}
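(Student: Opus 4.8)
The plan is to reduce everything to a two–parameter recurrence on the pair $(p,d)$ and then verify the claimed values by induction. The first and most important step is to enumerate the options of $T_{p,d}$ up to isomorphism. Since the graph is properly two-colored, every monochromatic component is a single vertex, so a move simply recolors one vertex and then contracts the newly monochromatic edges. I would check the four vertex types in turn: (i) recoloring the center $v$ merges $v$ with every pendant and every $u_i,w_i$, leaving a single vertex adjacent to the $d$ vertices $x_i$, i.e.\ the star $S_d$; (ii) recoloring a pendant merges it into $v$, giving $T_{p-1,d}$; (iii) recoloring any of $u_i$, $w_i$, or $x_i$ collapses the $i$-th diamond, turning its surviving black vertex into a new pendant while absorbing $x_i$ (or $u_i,w_i$) into $v$, giving $T_{p+1,d-1}$. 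Hence the option set is exactly $\{\,S_d,\ T_{p-1,d}\ (\text{if } p\ge 1),\ T_{p+1,d-1}\ (\text{if } d\ge 1)\,\}$.

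With the options in hand, the value is $T_{p,d} = \impartialGameSet{S_d,\ T_{p-1,d},\ T_{p+1,d-1}}$, and I would evaluate this by a double induction: an outer induction on $d$ and, for each fixed $d$, an inner induction on $p$. This ordering is well-founded because the only option that keeps $d$ fixed is $T_{p-1,d}$, which decreases $p$, whereas $T_{p+1,d-1}$ decreases $d$ (the increase in $p$ is harmless, since the entire row $d-1$ is already available from the outer hypothesis). The base row $d=0$ is handled directly: $T_{p,0}$ is exactly the star $S_p$, whose value is supplied by Theorem~\ref{thm:stars}.

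The remaining work is a case analysis of the mex, plugging in the star values from Theorem~\ref{thm:stars} (namely $S_d = 0,\ \ast,\ \ast 2$ according to whether $d$ is $0$, odd, or even and at least $2$) together with the inductively known values of $T_{p-1,d}$ and $T_{p+1,d-1}$. I expect the computation to split on the parity of $p$ and on the size of $d$, matching the cases of Table~\ref{table:Ts}; in particular the row $d=1$ behaves exceptionally (it is where the value $\ast 3$ appears), so I would treat $d=1$ as a separate base row alongside $d=0$ before running the induction for $d\ge 2$.

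The main obstacle is the move analysis rather than the arithmetic: one must be confident that recoloring $u_i$, $w_i$, and $x_i$ really all produce the same position $T_{p+1,d-1}$ after contraction, and that no essentially different options have been overlooked. Once the option set is pinned down, the nimber computation is a routine—if slightly tedious—verification that the entries of Table~\ref{table:Ts} satisfy the mex recurrence, and the double induction organizes this verification cleanly.
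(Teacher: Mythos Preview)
Your proposal is correct and mirrors the paper's proof: both identify the option set $\{S_d,\ T_{p-1,d},\ T_{p+1,d-1}\}$ and verify Table~\ref{table:Ts} by induction, with the $d=0$ column supplied by Theorem~\ref{thm:stars} and the $d=1$ column handled separately before the generic step. The only cosmetic difference is that the paper isolates the $p=0$ row first (via a parity/response argument) and then inducts on the remaining cells, whereas you absorb $p=0$ into the base of your inner induction on $p$; the substance is identical.
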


\begin{table}[h!]
  \begin{center}
  \begin{tabular}{|r||rrrrr||rr|}
    \hline
    $p\backslash d$ & 0 & 1 & 2 & 3 & 4  & $2k$ & $2k + 1$ \\ \hline \hline
      0 & 0 & 0 & 0 & 0 & 0  & 0 & 0 \\ 
      1 & $\ast$ & $\ast 3$ & $\ast$ & $\ast 2$ & $\ast$  & $\ast$ & $\ast 2$ \\ 
      2 & $\ast 2$ & 0 & 0 & 0 & 0  & 0 & 0 \\ 
      3 & $\ast$ & $\ast 3$ & $\ast$ & $\ast 2$ & $\ast$  & $\ast$ & $\ast 2$ \\ 
      4 & $\ast 2$ & 0 & 0 & 0 & 0  & 0 & 0 \\ 
      \hline \hline 
      $2l$ & $\ast 2$ & 0 & 0 & 0 & 0  & 0 & 0 \\ 
      $2l+1$ & $\ast$ & $\ast 3$ & $\ast$ & $\ast 2$ & $\ast$  & $\ast$ & $\ast 2$ \\ \hline
  \end{tabular}
  \end{center}
  \caption{Values of $T_{p,d}$ positions.}
  \label{table:Ts}
\end{table}

\begin{proof}
The $d=0$ column is exactly the $S_p$ case from Theorem \ref{thm:stars}.  The $p=0$ row is all zeroes because a play on the center results in $S_d$, which is in \outN, and any play on a diamond results in replacing that diamond with a pendant, which can be played by the opponent, resulting in another $p=0$ case.  In general, when $d > 0$ and $p > 0$, the options are: $S_d$ (by playing at the center), $T_{p-1, d}$ (by playing on any one of the pendants), and $T_{p+1, d-1}$ (by playing on any of the three vertices of any diamond).  This means that for any cell in the table, the options are to the cell above ($T_{p-1, d}$), the cell directly below and to the left ($T_{p+1, d-1}$), and the $S_d$ cell along the left edge of the table, which will either be $\ast$ or $\ast 2$.  

For the $d=1$ column, each cell has a move to $T_{1,0} = S_1 = \ast$.  The $p=0$ cell will be zero, because both possible moves are to $T_{1,0} = \ast$.  Then, inductively, the odd-$p$-valued cells have additional options above to 0 and below-and-left to $\ast 2$, so their value will be $\ast 3$.  The even-$p$-valued cells have options above to $\ast 3$ and below-and-left to $\ast$, so they have value zero.

For any $T_{p, d}$ where $p > 0, d > 1$, we can complete the proof by strong induction.  Assume that all cells above $p,d$ are as shown in Table \ref{table:Ts}, as well as the entire columns for lower $d$.  Then, given the three options spelled out above, $T_{p, d} = \impartialGameSet{S_d, T_{p-1, d}, T_{p+1, d-1}}$.  In the case where $p$ is even, these are $\impartialGameSet{ (\ast \text{ or } \ast 2), (\ast, \ast 2, \text{ or } \ast 3), (\ast, \ast 2, \text{ or } \ast 3)} = 0$.  When $p$ is odd and $d$ is even, these are $\impartialGameSet{\ast 2, 0, 0} = \ast$.  When $p$ and $d$ are both odd, these are $\impartialGameSet{\ast, 0, 0} = \ast 2$, completing the cases of the proof.
\end{proof}

\begin{Theorem}\label{thm:paths}
A path of length $x, (P_{x+1})$, has value $\ast (x \pmod 3)$. 
\end{Theorem}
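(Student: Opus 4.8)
The plan is to set up a one-step recursion in the length $x$ and then induct modulo $3$. Write $g(x)$ for the value of \fc\ on $P_{x+1}$, the path of length $x$. Since a path is bipartite, its proper two-coloring is the alternating one, so every monochromatic component is a single vertex; a legal move therefore recolors exactly one vertex to the color of a neighbor and then contracts the monochromatic edges this creates. The first step is to read the options off this picture. Playing on one of the two endpoints recolors it to its unique neighbor's color, merging the two into one vertex and leaving $P_x$ (length $x-1$). Playing on any interior vertex recolors it to the common color of its two neighbors (common because the coloring alternates), merging all three into a single vertex and leaving $P_{x-1}$ (length $x-2$). All endpoint moves are isomorphic to one another, as are all interior moves, so for $x \ge 2$ the option set is exactly $\{P_x, P_{x-1}\}$, giving $g(x) = \impartialGameSet{g(x-1),\, g(x-2)}$.

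Next I would dispatch the base cases. For $x = 0$ the graph is a single vertex with no moves, so $g(0) = 0$; for $x = 1$ the only move merges the two vertices, so $g(1) = \impartialGameSet{g(0)} = \impartialGameSet{0} = \ast$. Both agree with $\ast(x \bmod 3)$, and these two cases also cover the range in which the interior option is absent.

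With the recursion and base cases in hand, I would finish by strong induction on $x$ for $x \ge 2$. Assuming $g(x-1) = \ast\big((x-1) \bmod 3\big)$ and $g(x-2) = \ast\big((x-2) \bmod 3\big)$, the value $g(x) = \impartialGameSet{g(x-1),\, g(x-2)}$ equals $\ast\,\mathrm{mex}\big((x-1)\bmod 3,\ (x-2)\bmod 3\big)$ by the definition of $\impartialGameSet{\cdot}$, and I would compute this in the three residue classes: if $x \equiv 0$ the option residues are $\{2,1\}$, with mex $0$; if $x \equiv 1$ they are $\{0,2\}$, with mex $1$; and if $x \equiv 2$ they are $\{1,0\}$, with mex $2$. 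In every case the mex equals $x \bmod 3$, closing the induction.

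The main obstacle is the first step: correctly extracting the options, i.e. verifying that the automatic contraction collapses an interior move all the way down to $P_{x-1}$ (rather than something larger) and an endpoint move down to $P_x$, and that these two isomorphism types exhaust the options. This rests entirely on the alternating coloring forcing both neighbors of an interior vertex to share one color; once that is pinned down, the remaining mex bookkeeping is routine.
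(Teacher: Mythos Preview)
Your proof is correct and follows essentially the same approach as the paper: induction on $x$ with base cases $x\in\{0,1\}$, identifying the two option types (endpoint $\to P_x$, interior $\to P_{x-1}$), and a three-case mex computation modulo $3$. Your additional justification of why the contraction yields exactly $P_x$ and $P_{x-1}$ is more explicit than the paper's, but the argument is the same.
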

\begin{proof}
(Proof by induction on $x$.)  Base cases: $x \in \{0, 1\}$.  For $x=0$, there are no moves, so the value is 0.  For $x = 1$, there is only one option to 0, so the value is $\ast$.  

For the inductive case ($x \geq 2$) a play on one of the endpoints results in $P_{x}$; a play on any of the other vertices collapses that vertex with both neighbors, resulting in $P_{x-1}$.  By the inductive hypothesis, we can calculate the value based on these two options:

$$
\begin{cases}
  \{P_{0\ourmod 3}, P_{2 \ourmod 3}\} = \{\ast 2, \ast \} = 0 &, \text{if } x \ourmod 3 = 0 \\
  \{P_{1\ourmod 3}, P_{0 \ourmod 3}\} = \{0, \ast 2\} = \ast &, \text{if } x \ourmod 3 = 1 \\
  \{P_{2\ourmod 3}, P_{1 \ourmod 3}\} = \{\ast, 0\} = \ast 2 &, \text{if } x \ourmod 3 = 2
\end{cases}
$$
\end{proof}

\begin{Theorem}\label{thm:pendantsPath}
Let a \emph{broom graph} $B_{i,l}$ be the graph consisting of a path of length $l$ with end vertices $u$ and $v$, along with vertices $\{v_1,\ldots ,v_{i}\},i\geq 0,$ adjacent to $v$. Then the $2$-color \fc\ game on $B_{i,l}$ has the following values:

\begin{center}
\begin{tabular}{|r||rrrrrr|}
\hline
  $i$\textbackslash$l$  & 0 & 1 & 2 & $3j + 3$ & $3j + 4$ & $3j + 5$ \\ \hline \hline
  0        & 0 & $\ast$ & $\ast 2$ & 0 & $\ast$ & $\ast 2$ \\ 
  1        & $\ast$ & $\ast 2$ & 0 & $\ast$ & $\ast 2$ & 0 \\ 
  2        & $\ast 2$ & $\ast$ & $\ast 3$ & 0 & $\ast$ & $\ast 2$ \\ 
  $2k + 3$ & $\ast$ & $\ast 2$ & 0 & $\ast$ & $\ast 2$ & 0 \\ 
  $2k + 4$ & $\ast 2$ & $\ast$ & $\ast 2$ & 0 & $\ast$ & $\ast 2$ \\ \hline
\end{tabular}
\end{center}

\end{Theorem}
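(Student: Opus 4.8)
The plan is to first pin down the options of $B_{i,l}$ exactly, then read off the table by a two-variable induction. Because every game is played on a properly two-colored graph, each vertex is its own monochromatic component and all of a vertex's neighbors carry the single opposite color; so a move merges the chosen vertex with its \emph{entire} neighborhood. Writing the path as $u = a_0, a_1, \ldots, a_l = v$ with the $i$ pendants hanging off the hub $v = a_l$, I claim there are exactly four kinds of move. Playing a pendant merges it into $v$ and gives $B_{i-1,l}$. Playing the far endpoint $u$ merges $a_0$ with $a_1$ and gives $B_{i,l-1}$. Playing any interior path vertex $a_k$ ($1 \le k \le l-1$) merges $a_{k-1}, a_k, a_{k+1}$ and gives $B_{i,l-2}$ in every case, including the two boundary values $k=1$ (where the collapse also swallows $u$) and $k=l-1$ (where it absorbs the pendants onto the new endpoint). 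Finally, playing the hub $v$ merges $a_l$ simultaneously with $a_{l-1}$ and with all $i$ pendants, so the pendants disappear and what remains is the pure path $P_l$, of value $\ast((l-1)\ourmod 3)$ by Theorem~\ref{thm:paths}. This yields, for $l \ge 2$,
$$B_{i,l} = \impartialGameSet{B_{i-1,l},\ B_{i,l-1},\ B_{i,l-2},\ P_l},$$
where the first option appears only for $i \ge 1$; for $l = 1$ the $B_{i,l-2}$ term is simply absent, and $B_{i,0} = S_i$ by Theorem~\ref{thm:stars}.

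With this recurrence I would prove the table by strong induction, sweeping the columns left to right ($l = 0, 1, 2, \ldots$) and, within each column, the rows top to bottom ($i = 0, 1, 2, \ldots$). This order guarantees that when $B_{i,l}$ is evaluated every one of its options—$B_{i-1,l}$ (same column, previous row), $B_{i,l-1}$ and $B_{i,l-2}$ (earlier columns), and $P_l$—has already been fixed to its table value, so each entry collapses to a single mex. Column $l=0$ is precisely Theorem~\ref{thm:stars}. The short columns $l=1$ and $l=2$ I would compute by hand: they are the only places the pattern has not yet settled and they contain the irregular entries, such as the $\ast 3$ at $(i,l)=(2,2)$, so they must be verified directly and then serve as the seeds for the column recurrences.

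For $l \ge 3$ the plan is to split the mex computations by the residue $l \ourmod 3$—which fixes $P_l$ and, through the inductive hypothesis, the shapes of columns $l-1$ and $l-2$—together with the parity of $i$. The one point that needs a real argument rather than inspection is the eventual periodicity in $i$: once the hypothesis makes $B_{i,l-1}$ and $B_{i,l-2}$ constant in $i$ for $i \ge 3$, the column is driven by the single map $x \mapsto \impartialGameSet{x, B_{i,l-1}, B_{i,l-2}, P_l}$, and I would check that on the few nimbers that occur this map is a period-two cycle, which then perpetuates the two stable rows $2k+3$ and $2k+4$ downward indefinitely. Matching each (residue, parity) pair to Table~\ref{table:Ts} completes the induction.

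The hard part will be the option analysis of the first step, and in particular the two boundary interior plays: confirming that playing $a_1$ (which eliminates $u$) and playing $a_{l-1}$ (which drags the pendants onto the new path-end) both still land on $B_{i,l-2}$ is exactly where a hasty enumeration goes wrong, and it is what makes the clean three-term shift in $l$ work. The remaining difficulty is bookkeeping: the short columns must be handled individually because they break the period-three-in-$l$, period-two-in-$i$ pattern, and the $i$-periodicity has to be deduced from the mex recurrence rather than simply observed in the table.
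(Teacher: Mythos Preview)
Your option analysis is correct and matches the paper's exactly: the four move types and the resulting recurrence $B_{i,l} = \impartialGameSet{B_{i-1,l},\ B_{i,l-1},\ B_{i,l-2},\ P_l}$ are precisely what the paper uses (it writes the hub-play option as ``$P_{l-1}$'', meaning a path of length $l-1$, which is your $P_l$ in vertex-count notation). The only cosmetic difference is the induction order---you sweep columns ($l$ outer, $i$ inner) while the paper inducts on $i$ first with $i\le 2$ handled as separate base cases---and both orderings lead to the same mex verifications.
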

\begin{proof}
We proceed by induction on $i$. Our base cases are where $i \in \{0, 1, 2\}$; cases 0 and 1 are just paths and are covered by Theorem \ref{thm:paths}.  For the case where $i=2$, we will again use induction on $l$.  The base cases are $l \leq 4$.  For $l=0$, the graph is equivalent to $P_3 = \ast 2$ by Theorem \ref{thm:paths}.  For $l=1$, the graph is a star.  Playing on any pendant vertex adjacent to $v$ moves to $P_3$ ($\ast 2$) and playing on the center moves to 0, so the value is $\ast$.   The remaining base cases for $l \in \{2, 3, 4\}$ are covered in parts a, b, and c of Figure \ref{fig:pendantsPathl2}.  The options for the inductive cases for $l \geq 5$ are drawn out in part d of the same figure.  The options $B_{2,l-1}$ and $P_{l-1}$ both have value $\ast ((l-1) \ourmod 3)$; the options $B_{2, l-2}$ and $P_{l+1}$ have value $\ast ((l+1) \ourmod 3)$.  $\{\ast ((l-1) \ourmod 3), \ast ((l+1) \ourmod 3)\} = \ast (l \ourmod 3)$.  This completes the base cases on $i$.

\begin{figure}[h!]
  \begin{subfigure}[b]{.45\textwidth}
    \begin{center}
      \begin{tikzpicture}[node distance = 1cm, every node/.style={draw=black,circle}]
        \node[draw] (p1) [label=right:{0}] {};
        \node[draw, fill=black] (v) [below left of=p1, label=below:{$\ast$}] {};
        \node[draw] (p2) [below right of=v, label=right:{0}] {};
        \node[draw] (c1) [left of=v, label=below:{$\ast 2$}] {};
        \node[draw, fill=black] (c2) [left of=c1, label=below:{$\ast$}] {};

        \path[-]
          (p1) edge (v)
          (v) edge (p2)
          (v) edge (c1)
          (c1) edge (c2)
        ;
      \end{tikzpicture}
      \caption{$l=2$: value $= \ast 3$}
    \end{center}
  \end{subfigure}
  \begin{subfigure}[b]{.45\textwidth}
    \begin{center}
      \begin{tikzpicture}[node distance = 1cm, every node/.style={draw=black,circle}]
        \node[draw] (p1) [label=right:{$\ast$}] {};
        \node[draw, fill=black] (v) [below left of=p1, label=below:{$\ast 2$}] {};
        \node[draw] (p2) [below right of=v, label=right:{$\ast$}] {};
        \node[draw] (c1) [left of=v, label=below:{$\ast$}] {};
        \node[draw, fill=black] (c2) [left of=c1, label=below:{$\ast$}] {};
        \node[draw] (c3) [left of=c2, label=below:{$\ast 3$}] {};

        \path[-]
          (p1) edge (v)
          (v) edge (p2)
          (v) edge (c1)
          (c1) edge (c2)
          (c2) edge (c3)
        ;
      \end{tikzpicture}
      \caption{$l=3$: value $= 0$}
    \end{center}
  \end{subfigure}
  
  \begin{subfigure}[b]{.45\textwidth}
    \begin{center}
      \begin{tikzpicture}[node distance = 1cm, every node/.style={draw=black,circle}]
        \node[draw] (p1) {};
        \node[draw, fill=black] (v) [below left of=p1, label=below:{0}] {};
        \node[draw] (p2) [below right of=v, label=below:{$\ast 2$}] {};
        \node[draw] (c1) [left of=v, label=below:{$\ast 3$}] {};
        \node[draw, fill=black] (c2) [left of=c1, label=below:{$\cdots$}] {};
        \node[draw] (c3) [left of=c2, label=below:{$\ast 3$}] {};
        \node[draw, fill=black] (c4) [left of=c3, label=below:{0}] {};

        \path[-]
          (p1) edge (v)
          (v) edge (p2)
          (v) edge (c1)
          (c1) edge (c2)
          (c2) edge (c3)
          (c3) edge (c4)
        ;
      \end{tikzpicture}
      \caption{$l=4$: value $= \ast$}
    \end{center}
  \end{subfigure}
  \begin{subfigure}[b]{.45\textwidth}
    \begin{center}
      \begin{tikzpicture}[node distance = 1cm, every node/.style={draw=black,circle}]
        \node[draw] (p1) {};
        \node[draw, fill=black] (v) [below left of=p1, label=below:{$P_{l-1}$}] {};
        \node[draw] (p2) [below right of=v, label=below:{$P_{l+1}$}] {};
        \node[draw] (c1) [left of=v] {};
        \node[draw=none, rectangle] (dots) [left of=c1, label=below:{$B_{2, l-2}$}] {$\cdots$};
        \node[draw, fill=gray] (c2) [left of=dots] {};
        \node[draw, fill=gray] (c3) [left of=c2, label=below:{$B_{2, l-1}$}] {};

        \path[-]
          (p1) edge (v)
          (v) edge (p2)
          (v) edge (c1)
          (c1) edge (dots)
          (dots) edge (c2)
          (c2) edge (c3)
        ;
      \end{tikzpicture}
      \caption{$l\geq 5$: value $= \ast (l \text{ mod } 3)$}
    \end{center}
  \end{subfigure}
  \caption{Cases for Theorem \ref{thm:pendantsPath} where $i=2$.  Labels near vertices show resulting nimbers of playing at that vertex.  a, b, and c are base cases; d, covers the inductive cases.}
  \label{fig:pendantsPathl2}
\end{figure}

For the inductive cases $i \geq 3$, we split into the odd and even cases: $i = 2k + 3$ and $i = 2k + 4$ ($k \geq 0$).  In Figure \ref{fig:pendants_path_l3} we look at the odd case and show the options for the base cases ($i \leq 2$; a, b, c) and the inductive cases ($l \geq 3$; d).  In the inductive case $B_{o, l}$, the options $B_{o, l-1}$ and $B_{e, l}$ are both equal to $\ast (l \ourmod 3)$, while the options $B_{o, l-2}$ and $P_{l-1}$ both have value $\ast ((l-1) \ourmod 3)$.  Thus, the value of $B_{o, l}$ is $\ast ((l+1) \ourmod 3)$.

\begin{figure}[h!]
  \begin{subfigure}[b]{.45\textwidth}
    \begin{center}
      \begin{tikzpicture}[node distance = 1cm, every node/.style={draw=black,circle}]
        \node[draw] (p1) [label=right:{$\ast 2$}] {};
        \node[draw, fill=black] (v) [below left of=p1, label=below:{0}] {};
        \node[draw = none, rectangle] (vdots) [right of=v] {$\vdots$};
        \node[draw] (p2) [below right of=v, label=right:{$\ast 2$}] {};

        \path[-]
          (p1) edge (v)
          (v) edge (vdots)
          (v) edge (p2)
        ;
      \end{tikzpicture}
      \caption{$l=0$: value $= \ast$}
    \end{center}
  \end{subfigure}
  \begin{subfigure}[b]{.45\textwidth}
    \begin{center}
      \begin{tikzpicture}[node distance = 1cm, every node/.style={draw=black,circle}]
        \node[draw] (p1) [label=right:{$\ast$}] {};
        \node[draw, fill=black] (v) [below left of=p1, label=below:{0}] {};
        \node[draw = none, rectangle] (vdots) [right of=v] {$\vdots$};
        \node[draw] (p2) [below right of=v, label=right:{$\ast$}] {};
        \node[draw] (c1) [left of=v, label=below:{$\ast$}] {};

        \path[-]
          (p1) edge (v)
          (v) edge (vdots)
          (v) edge (p2)
          (v) edge (c1)
        ;
      \end{tikzpicture}
      \caption{$l=1$: value $=\ast 2$}
    \end{center}
  \end{subfigure}
  
  \begin{subfigure}[b]{.45\textwidth}
    \begin{center}
      \begin{tikzpicture}[node distance = 1cm, every node/.style={draw=black,circle}]
        \node[draw] (p1) [label=right:{$\ast 3$}] {};
        \node[draw, fill=black] (v) [below left of=p1, label=below:{$\ast$}] {};
        \node[draw = none, rectangle] (vdots) [right of=v] {$\vdots$};
        \node[draw] (p2) [below right of=v, label=below:{$\ast 3$}] {};
        \node[draw] (c1) [left of=v, label=below:{$\ast$}] {};
        \node[draw, fill=black] (c2) [left of=c1, label=below:{$\ast 2$}] {};

        \path[-]
          (p1) edge (v)
          (v) edge (vdots)
          (v) edge (p2)
          (v) edge (c1)
          (c1) edge (c2)
        ;
      \end{tikzpicture}
      \caption{$l=2$: value $=0$}
    \end{center}
  \end{subfigure}
  \begin{subfigure}[b]{.45\textwidth}
    \begin{center}
      \begin{tikzpicture}[node distance = 1cm, every node/.style={draw=black,circle}]
        \node[draw] (p1) {};
        \node[draw, fill=black] (v) [below left of=p1, label=below:{$P_{l-1}$}] {};
        \node[draw = none, rectangle] (vdots) [right of=v] {$\vdots$};
        \node[draw] (p2) [below right of=v, label=below:{$B_{e, l}$}] {};
        \node[draw] (c1) [left of=v] {};
        \node[draw=none, rectangle] (dots) [left of=c1, label=below:{$B_{o, l-2}$}] {$\cdots$};
        \node[draw, fill=gray] (c2) [left of=dots] {};
        \node[draw, fill=gray] (c3) [left of=c2, label=below:{$B_{o, l-1}$}] {};

        \path[-]
          (p1) edge (v)
          (v) edge (vdots)
          (v) edge (p2)
          (v) edge (c1)
          (c1) edge (dots)
          (dots) edge (c2)
          (c2) edge (c3)
        ;
      \end{tikzpicture}
      \caption{$l\geq 3$: value $= \ast ((l+1) \ourmod 3)$}
    \end{center}
  \end{subfigure}
  \caption{Cases for Theorem \ref{thm:pendantsPath} where $i= 2k + 3$.  Labels near vertices show resulting nimbers of playing at that vertex.  a, b, and c are base cases; d, covers the inductive cases.}
  \label{fig:pendants_path_l3}
\end{figure}

In Figure \ref{fig:pendants_path_l4} we examine the even case and show the options for the base cases ($i \leq 2$; a, b, c) and the inductive cases ($l \geq 3$; d).  In the inductive case $B_{e, l}$, the options $B_{e, l-1}$ and $P_{l-1}$ are both equal to $\ast ((l-1) \ourmod 3)$, while the options $B_{e, l-2}$ and $B_{o, l}$ both have value $\ast ((l+1) \ourmod 3)$.  Thus, the value of $B_{e, l}$ is $\ast (l \ourmod 3)$.  This completes all the cases. 
\end{proof}

\begin{figure}[h!]
  \begin{subfigure}[b]{.45\textwidth}
    \begin{center}
      \begin{tikzpicture}[node distance = 1cm, every node/.style={draw=black,circle}]
        \node[draw] (p1) [label=right:{$\ast$}] {};
        \node[draw, fill=black] (v) [below left of=p1, label=below:{0}] {};
        \node[draw = none, rectangle] (vdots) [right of=v] {$\vdots$};
        \node[draw] (p2) [below right of=v, label=right:{$\ast$}] {};

        \path[-]
          (p1) edge (v)
          (v) edge (vdots)
          (v) edge (p2)
        ;
      \end{tikzpicture}
      \caption{$l=0$: value $= \ast 2$}
    \end{center}
  \end{subfigure}
  \begin{subfigure}[b]{.45\textwidth}
    \begin{center}
      \begin{tikzpicture}[node distance = 1cm, every node/.style={draw=black,circle}]
        \node[draw] (p1) [label=right:{$\ast 2$}] {};
        \node[draw, fill=black] (v) [below left of=p1, label=below:{0}] {};
        \node[draw = none, rectangle] (vdots) [right of=v] {$\vdots$};
        \node[draw] (p2) [below right of=v, label=right:{$\ast 2$}] {};
        \node[draw] (c1) [left of=v, label=below:{$\ast 2$}] {};

        \path[-]
          (p1) edge (v)
          (v) edge (vdots)
          (v) edge (p2)
          (v) edge (c1)
        ;
      \end{tikzpicture}
      \caption{$l=1$: value $=\ast$}
    \end{center}
  \end{subfigure}
  
  \begin{subfigure}[b]{.45\textwidth}
    \begin{center}
      \begin{tikzpicture}[node distance = 1cm, every node/.style={draw=black,circle}]
        \node[draw] (p1) [] {};
        \node[draw, fill=black] (v) [below left of=p1, label=below:{$\ast$}] {};
        \node[draw = none, rectangle] (vdots) [right of=v] {$\vdots$};
        \node[draw] (p2) [below right of=v, label=below:{0}] {};
        \node[draw] (c1) [left of=v, label=below:{0}] {};
        \node[draw, fill=black] (c2) [left of=c1, label=below:{$\ast$}] {};

        \path[-]
          (p1) edge (v)
          (v) edge (vdots)
          (v) edge (p2)
          (v) edge (c1)
          (c1) edge (c2)
        ;
      \end{tikzpicture}
      \caption{$l=2$: value $=\ast 2$}
    \end{center}
  \end{subfigure}
  \begin{subfigure}[b]{.45\textwidth}
    \begin{center}
      \begin{tikzpicture}[node distance = 1cm, every node/.style={draw=black,circle}]
        \node[draw] (p1) {};
        \node[draw, fill=black] (v) [below left of=p1, label=below:{$P_{l-1}$}] {};
        \node[draw = none, rectangle] (vdots) [right of=v] {$\vdots$};
        \node[draw] (p2) [below right of=v, label=below:{$B_{o, l}$}] {};
        \node[draw] (c1) [left of=v] {};
        \node[draw=none, rectangle] (dots) [left of=c1, label=below:{$B_{e, l-2}$}] {$\cdots$};
        \node[draw, fill=gray] (c2) [left of=dots] {};
        \node[draw, fill=gray] (c3) [left of=c2, label=below:{$B_{e, l-1}$}] {};

        \path[-]
          (p1) edge (v)
          (v) edge (vdots)
          (v) edge (p2)
          (v) edge (c1)
          (c1) edge (dots)
          (dots) edge (c2)
          (c2) edge (c3)
        ;
      \end{tikzpicture}
      \caption{$l\geq 3$: value $= \ast (l \ourmod 3)$}
    \end{center}
  \end{subfigure}
  \caption{Cases for Theorem \ref{thm:pendantsPath} where $i= 2k + 4$.  Labels near vertices show resulting nimbers of playing at that vertex.  a, b, and c are base cases; d, covers the inductive cases.}
  \label{fig:pendants_path_l4}
\end{figure}




\begin{Theorem}\label{thm:2paths}
A properly two-colored complete bipartite graph is in $\Pclass$ if and only if each part has at least two vertices.
\end{Theorem}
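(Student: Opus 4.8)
The plan is to reduce the complete bipartite case to the star result of Theorem~\ref{thm:stars} by pinning down precisely what a single move does to $K_{m,n}$. I would write the two colour classes as $A$ and $B$ with $|A| = m$ and $|B| = n$. Since $K_{m,n}$ is connected and bipartite, its proper two-colouring is exactly this bipartition (up to swapping the two colours), so the position, and hence its option set, is unambiguous; I would assume throughout that $m, n \geq 1$ so that both colours are genuinely present.

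First I would analyse the options. A move recolours a chosen vertex $v$ to the opposite colour and then contracts every newly monochromatic edge. Taking $v \in A$, after recolouring $v$ shares its colour with every vertex of $B$ and is adjacent to all of them, so the contraction merges $v$ together with the entire class $B$ into a single vertex. What remains is the $m-1$ untouched vertices of $A$, none adjacent to one another but all adjacent to the one merged vertex: this is $K_{m-1,1} = S_{m-1}$. By the symmetry of $K_{m,n}$ every vertex of $A$ behaves identically, and likewise a move on any vertex of $B$ yields $S_{n-1}$. Hence $K_{m,n} = \impartialGameSet{S_{m-1},\, S_{n-1}}$, and the whole theorem becomes a \emph{mex} computation fed by Theorem~\ref{thm:stars}.

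Then I would settle both directions. By Theorem~\ref{thm:stars}, $S_k = 0$ only when $k = 0$, while $S_k \in \{\ast, \ast 2\}$ for every $k \geq 1$. For the ``if'' direction, when $m, n \geq 2$ both options $S_{m-1}$ and $S_{n-1}$ have index at least $1$ and are therefore nonzero, so $0$ is excluded from neither, giving $K_{m,n} = \impartialGameSet{S_{m-1}, S_{n-1}} = 0$ and placing the position in $\Pclass$. For the ``only if'' direction I would argue by contraposition: if some part is a singleton, say $m = 1$, then $K_{1,n}$ is literally the star $S_n$, which is nonzero for all $n \geq 1$ by Theorem~\ref{thm:stars}, so the position lies in $\Nclass$.

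I do not anticipate a serious obstacle here; the single step carrying all the content is the observation that one move collapses the entire opposite colour class into a single vertex, turning $K_{m,n}$ into a star. Once that option structure is fixed, the rest is routine bookkeeping with \emph{mex} and the already-established values of $S_i$. The only points worth stating with care are that the bipartition (and hence the set of options) is forced by connectedness, and that the argument requires $m, n \geq 1$ so that a genuine two-coloured position is under discussion.
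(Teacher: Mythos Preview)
Your proof is correct and follows essentially the same approach as the paper: observe that any single move from $K_{m,n}$ collapses one entire colour class and yields a star, invoke Theorem~\ref{thm:stars} to see that non-trivial stars lie in $\Nclass$, and handle the degenerate case where one part is a singleton by noting that $K_{1,n}$ is already a star. Your version is simply more explicit, writing out $K_{m,n} = \impartialGameSet{S_{m-1}, S_{n-1}}$ and finishing with a mex computation, whereas the paper states the same idea in two sentences.
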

\begin{proof}
A non-trivial star graph is in $\Nclass$, so consider the case where each part has at least two vertices. WLOG there is only one play. Once a node is colored, the result is equivalent to a properly two-colored star, which is in $\Nclass$.
\end{proof}

\newcommand{\Routes}[5]{%
\ifthenelse{\equal{#1}{0}}{%
  \ensuremath{P_{\ifthenelse{\equal{0}{#2}}{}{0^{#2}}
                 \ifthenelse{\equal{0}{#3}}{}{2^{#3}}
                 \ifthenelse{\equal{0}{#4}}{}{4^{#4}}
                 \ifthenelse{\equal{0}{#5}}{}{6^{#5}}}}}{%
  \ensuremath{P_{\ifthenelse{\equal{0}{#2}}{}{1^{#2}}
                 \ifthenelse{\equal{0}{#3}}{}{3^{#3}}
                 \ifthenelse{\equal{0}{#4}}{}{5^{#4}}}}}
}


Next we examine a number of positions consisting of two vertices $u$ and $v$ joined by a collection of internally disjoint paths. We will denote such a graph by $P_{l_1,l_2,\ldots}$ where $l_i$ is the length the $i^{th}$ path. WLOG we may assume that $l_1\leq l_2 \leq \ldots$. The following lemma will prove useful for a proof in Sec.~\ref{sec:complexity}.  

Alternative Notation: to simplify what would be long lists, we alternatively describe these positions with the subscripted lengths of the paths between $u$ and $v$, raised to the power of the number of times they appear.  Since these are proper two-colored graphs, the parity of all edges will be the same.  We use a path length of 0 to indicate that $u$ and $v$ have been merged into one vertex.  Thus, these positions fall into four categories: $\Routes{0}{1+}{i}{j}{k}$, even-length paths and $u = v$; $\Routes{1}{1+}{i}{j}{0}$, odd-length paths where $u$ and $v$ are neighbors; $\Routes{0}{0}{i}{j}{k}$, even-length paths where $u$ and $v$ are distinct; and finally $\Routes{1}{0}{i}{j}{0}$, odd-length paths where $u$ and $v$ are not neighbors.  We have nimber values for these shown in Tables \ref{table:Routes02461} through \ref{table:Routes35}.  (There's no table for $\Routes{0}{1+}{i}{j}{0}$, because those are equal to $T_{i, j}$, the values for which can be found in table \ref{table:Ts}.)

\begin{table}[h!]
  \begin{center}
  \begin{tabular}{|r||rrrr||cc|}
  \hline
    $i$\textbackslash$j$  & $0$ & $1$ & $2$ & $3$ & $2l + 4$ & $2l + 5$ \\ \hline \hline
    0        & $\ast  $ & $\ast  $ & $\ast  $ & $\ast  $ & $\ast  $ & $\ast  $\\ 
    1        & $\ast 2$ & $0     $ & $\ast 3$ & $0     $ & $\ast 3$ & $0     $ \\ 
    2        & $\ast  $ & $\ast  $ & $\ast  $ & $\ast  $ & $\ast  $ & $\ast  $\\ 
    3        & $\ast 2$ & $0     $ & $\ast 3$ & $0     $ & $\ast 3$ & $0     $ \\ 
    \hline \hline
    $2k + 4$ & $\ast  $ & $\ast  $ & $\ast  $ & $\ast  $ & $\ast  $ & $\ast  $\\ 
    $2k + 5$ & $\ast 2$ & $0     $ & $\ast 3$ & $0     $ & $\ast 3$ & $0     $ \\
    \hline
  \end{tabular}
  \end{center}
  \caption{Nimber values for $\protect \Routes{0}{1+}{i}{j}{1}$.}
  \label{table:Routes02461}
\end{table}

\begin{table}[h!]
  \begin{center}
  \begin{tabular}{|r||rrrr||cc|}
  \hline
    $i$\textbackslash$j$  & $0$ & $1$ & $2$ & $3$ & $2l + 4$ & $2l + 5$ \\ \hline \hline
    0        & $\ast 2$ & $0     $ & $\ast 2$ & $0     $ & $\ast 2$ & $0     $\\ 
    1        & $\ast  $ & $\ast 4$ & $\ast  $ & $\ast 4$ & $\ast  $ & $\ast 4$ \\ 
    2        & $\ast 2$ & $0     $ & $\ast 2$ & $0     $ & $\ast 2$ & $0     $\\ 
    3        & $\ast  $ & $\ast 4$ & $\ast  $ & $\ast 4$ & $\ast  $ & $\ast 4$ \\ 
    \hline \hline
    $2k + 4$ & $\ast 2$ & $0     $ & $\ast 2$ & $0     $ & $\ast 2$ & $0     $\\ 
    $2k + 5$ & $\ast  $ & $\ast 4$ & $\ast  $ & $\ast 4$ & $\ast  $ & $\ast 4$ \\
    \hline
  \end{tabular}
  \end{center}
  \caption{Nimber values for $\protect \Routes{0}{1+}{i}{j}{2}$.}
  \label{table:Routes02462}
\end{table}

\begin{table}[h!]
  \begin{center}
  \begin{tabular}{|r||rrrr||cc|}
  \hline
    $i$\textbackslash$j$  & $0$ & $1$ & $2$ & $3$ & $2l + 4$ & $2l + 5$ \\ \hline \hline
    0        & $\ast  $ & $0     $ & $\ast  $ & $0     $ & $\ast  $ & $0     $\\ 
    1        & $\ast 2$ & $\ast 3$ & $\ast 2$ & $\ast 3$ & $\ast 2$ & $\ast 3$ \\ 
    2        & $\ast  $ & $0     $ & $\ast  $ & $0     $ & $\ast  $ & $0     $\\ 
    3        & $\ast 2$ & $\ast 3$ & $\ast 2$ & $\ast 3$ & $\ast 2$ & $\ast 3$ \\ 
    \hline \hline
    $2k + 4$ & $\ast  $ & $0     $ & $\ast  $ & $0     $ & $\ast  $ & $0     $\\ 
    $2k + 5$ & $\ast 2$ & $\ast 3$ & $\ast 2$ & $\ast 3$ & $\ast 2$ & $\ast 3$ \\
    \hline
  \end{tabular}
 \end{center}
  \caption{Nimber values for $\protect \Routes{0}{1+}{i}{j}{3+}$.}
  \label{table:Routes02463}
\end{table}

\begin{table}[h!]
  \begin{center}
  \begin{tabular}{|r||rrrr||cc|}
  \hline
    $i$\textbackslash$j$  & $0$ & $1$ & $2$ & $3$ & $2l + 4$ & $2l + 5$ \\ \hline \hline
    0        & $\ast  $ & $\ast  $ & $\ast $ & $\ast$ & $\ast$ & $\ast$ \\ 
    1        & $0     $ & $0     $ & $0 $ & $ 0 $ & $ 0 $ & 0 \\ 
    2        & $\ast  $ & $\ast  $ & $\ast  $ & $\ast  $ & $\ast  $ & $\ast $ \\  
    3        & $0     $ & $0     $ & $0 $ & $ 0 $ & $ 0 $ & 0 \\ 
    \hline \hline
    $2k + 4$ & $\ast$ & $\ast$ & $\ast $ & $\ast$ & $\ast $ & $\ast$ \\ 
    $2k + 5$ & $ 0  $ & $ 0 $ & $0 $ & 0 & $0 $ & $0$ \\ \hline
  \end{tabular}
  \end{center}
  \caption{Nimber values for $\protect \Routes{1}{1+}{i}{j}{0}$.}
  \label{table:Routes135}
\end{table}

\begin{table}[h!]
  \begin{center}
  \begin{tabular}{|r||rrrr||cc|}
  \hline
    $i$\textbackslash$j$  & $0$ & $1$ & $2$ & $3$ & $2l + 4$ & $2l + 5$ \\ \hline \hline
    0        & $0     $ & $\ast  $ & $0     $ & $0     $ & $0     $ & $0    $ \\ 
    1        & $\ast 2$ & $\ast  $ & $\ast 2$ & $\ast  $ & $\ast 2$ & $\ast  $ \\  
    2        & $0     $ & $\ast  $ & $0     $ & $\ast  $ & $0     $ & $\ast  $ \\ 
    3        & $0     $ & $\ast  $ & $\ast 2$ & $\ast  $ & $\ast 2$ & $\ast  $ \\  
    \hline \hline
    $2k + 4$ & $0     $ & $\ast  $ & $0     $ & $\ast  $ & $0     $ & $\ast  $ \\ 
    $2k + 5$ & $0     $ & $\ast  $ & $\ast 2$ & $\ast  $ & $\ast 2$ & $\ast  $ \\   
   \hline
  \end{tabular}
  \end{center}
  \caption{Nimber values for $\protect \Routes{0}{0}{i}{j}{0}$.}
  \label{table:Routes24}
\end{table}

\begin{table}[h!]
  \begin{center}
  \begin{tabular}{|r||rrrr||cc|}
  \hline
    $i$\textbackslash$j$  & $0$ & $1$ & $2$ & $3$ & $2l + 4$ & $2l + 5$ \\ \hline \hline
    0        & $0     $ & $\ast  $ & $\ast  $ & $\ast  $ & $\ast  $ & $\ast  $ \\ 
    1        & $0     $ & $\ast 3$ & $0     $ & $\ast 3$ & $0     $ & $\ast 3$ \\  
    2        & $0     $ & $\ast  $ & $0     $ & $\ast  $ & $0     $ & $\ast  $ \\ 
    3        & $0     $ & $\ast 3$ & $0     $ & $\ast 3$ & $0     $ & $\ast 3$ \\  
    \hline \hline
    $2k + 4$ & $0     $ & $\ast  $ & $0     $ & $\ast  $ & $0     $ & $\ast  $ \\ 
    $2k + 5$ & $0     $ & $\ast 3$ & $0     $ & $\ast 3$ & $0     $ & $\ast 3$ \\  
   \hline
  \end{tabular}
  \end{center}
  \caption{Nimber values for $\protect \Routes{0}{0}{i}{j}{1}$.}
  \label{table:Routes2461}
\end{table}

\begin{table}[h!]
  \begin{center}
  \begin{tabular}{|r||rrrr||cc|}
  \hline
    $i$\textbackslash$j$  & $0$ & $1$ & $2$ & $3$ & $2l + 4$ & $2l + 5$ \\ \hline \hline
    0        & $0     $ & $\ast 2$ & $0     $ & $\ast 2$ & $0     $ & $\ast 2$ \\  
    1        & $0     $ & $\ast  $ & $0     $ & $\ast  $ & $0     $ & $\ast  $ \\  
    2        & $0     $ & $\ast  $ & $0     $ & $\ast  $ & $0     $ & $\ast  $ \\  
    3        & $0     $ & $\ast  $ & $0     $ & $\ast  $ & $0     $ & $\ast  $ \\  
    \hline \hline
    $2k + 4$ & $0     $ & $\ast  $ & $0     $ & $\ast  $ & $0     $ & $\ast  $ \\  
    $2k + 5$ & $0     $ & $\ast  $ & $0     $ & $\ast  $ & $0     $ & $\ast  $ \\  
   \hline
  \end{tabular}
  \end{center}
  \caption{Nimber values for $\protect \Routes{0}{0}{i}{j}{2}$.}
  \label{table:Routes2462}
\end{table}

\begin{table}[h!]
  \begin{center}
  \begin{tabular}{|r||rrrr||cc|}
  \hline
    $i$\textbackslash$j$  & $0$ & $1$ & $2$ & $3$ & $2l + 4$ & $2l + 5$ \\ \hline \hline
    0        & $0     $ & $\ast  $ & $0     $ & $\ast  $ & $0     $ & $\ast  $ \\  
    1        & $0     $ & $\ast  $ & $0     $ & $\ast  $ & $0     $ & $\ast  $ \\  
    2        & $0     $ & $\ast  $ & $0     $ & $\ast  $ & $0     $ & $\ast  $ \\  
    3        & $0     $ & $\ast  $ & $0     $ & $\ast  $ & $0     $ & $\ast  $ \\  
    \hline \hline
    $2k + 4$ & $0     $ & $\ast  $ & $0     $ & $\ast  $ & $0     $ & $\ast  $ \\  
    $2k + 5$ & $0     $ & $\ast  $ & $0     $ & $\ast  $ & $0     $ & $\ast  $ \\  
   \hline
  \end{tabular}
  \end{center}
  \caption{Nimber values for $\protect \Routes{0}{0}{i}{j}{3+}$.}
  \label{table:Routes2463}
\end{table}

\begin{table}[h!]
  \begin{center}
  \begin{tabular}{|r||rrrr||cc|}
  \hline
    $i$\textbackslash$j$  & $0$ & $1$ & $2$ & $3$ & $2l + 4$ & $2l + 5$ \\ \hline \hline
    0        & $0     $ & $\ast 2$ & $\ast  $ & $\ast  $ & $\ast  $ & $\ast  $ \\ 
    1        & $0     $ & $0     $ & $0     $ & $0     $ & $0     $ & $0    $ \\ 
    2        & $\ast  $ & $\ast 3$ & $\ast  $ & $\ast 2$ & $\ast  $ & $\ast 2$ \\  
    3        & $\ast 2$ & $0     $ & $0     $ & $0     $ & $0     $ & $0    $ \\ 
    \hline \hline
    $2k + 4$ & $\ast  $ & $\ast 3$ & $\ast  $ & $\ast 2$ & $\ast  $ & $\ast 2$ \\ 
    $2k + 5$ & $\ast 2$ & $0     $ & $0     $ & $0     $ & $0     $ & $0     $ \\ \hline
  \end{tabular}
  \end{center}
  \caption{Nimber values for $\protect \Routes{1}{0}{i}{j}{0}$.}
  \label{table:Routes35}
\end{table}

\begin{Lemma}[Route Nimbers]
\label{lemma:routeNimbers}
The nimber values given in Tables \ref{table:Routes02461} through \ref{table:Routes35} are correct.
\end{Lemma}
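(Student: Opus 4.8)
The plan is to prove all eight tables simultaneously by strong induction on the number of edges of the position (equivalently, the total length of the paths joining $u$ and $v$), since every legal move strictly decreases this quantity even when it increases one of the length-multiplicities. The smallest and terminal positions are supplied by the earlier results: when $u=v$ with no long paths the position is a star or diamond graph, so the ``missing'' table $\Routes{0}{1+}{i}{j}{0}$ is exactly Table~\ref{table:Ts} (Theorems~\ref{thm:stars} and~\ref{thm:diamonds}), and a single path is handled by Theorem~\ref{thm:paths}. At each inductive step I compute the value as the mex of the option values, every option being a strictly smaller route position whose value is already known by the induction hypothesis or by those theorems.

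The crucial preliminary step is a move-classification lemma that, for each structural type, records exactly which positions arise as options. Because the graphs are properly two-colored, a move recolors a single vertex and then triggers a cascade of contractions of every edge that has become monochromatic, and tracking this cascade by color-parity along each path is the heart of the argument. The rules I expect to establish are as follows. Playing on an internal vertex of a path shortens it by $2$; if the path had length $\geq 4$ this preserves the structural type and merely shuffles multiplicities (so, e.g., $\Routes{0}{0}{i}{j}{k}$ gains options $\Routes{0}{0}{i+1}{j-1}{k}$ and $\Routes{0}{0}{i}{j+1}{k-1}$, and $\Routes{1}{0}{i}{j}{0}$ gains $\Routes{1}{0}{i+1}{j-1}{0}$), whereas shortening a length-$3$ path to length $1$ introduces the $u$--$v$ edge (e.g.\ $\Routes{1}{0}{i}{j}{0}\to\Routes{1}{1+}{i-1}{j}{0}$) and playing the middle of a length-$2$ path merges $u$ and $v$ (e.g.\ $\Routes{0}{0}{i}{j}{k}\to\Routes{0}{1+}{i-1}{j}{k}$). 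A move on a hub collapses one layer of every path at once: from $\Routes{0}{0}{i}{j}{k}$ it yields $\Routes{1}{1+}{j}{k}{0}$ (or $\Routes{1}{0}{j}{k}{0}$ if $i=0$); from $\Routes{1}{0}{i}{j}{0}$ it yields $\Routes{0}{0}{i}{j}{0}$; from $\Routes{1}{1+}{i}{j}{0}$ it yields the diamond graph $T_{i,j}=\Routes{0}{1+}{i}{j}{0}$ (each length-$3$ path becoming a pendant and each length-$5$ path a diamond); and in the merged family $\Routes{0}{1+}{i}{j}{k}$ the hub move yields $\Routes{0}{1+}{j}{k}{0}$, while playing a pendant, diamond, or hexagon instead gives $\Routes{0}{1+}{i-1}{j}{k}$, $\Routes{0}{1+}{i+1}{j-1}{k}$, or $\Routes{0}{1+}{i}{j+1}{k-1}$. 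Since a double edge would violate simplicity, at most one path has length $0$ or $1$, which is why the ``$1+$'' columns carry a single value.

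With the transition rules in hand, each table entry reduces to a finite mex of already-tabulated values. I would organize the verification by the regime of the largest-path count (the fixed last parameter $k\in\{0,1,2,3+\}$, and the length-$5$ count in the odd case), treating the small indices $i,j\in\{0,1,2,3\}$ as explicit base computations. For the generic large entries (the $2l+4$, $2l+5$, $2k+4$, $2k+5$ patterns and the $3+$ column) I would argue that increasing a multiplicity by $2$ leaves the \emph{multiset} of option values unchanged, hence leaves the mex unchanged, so the rows and columns are eventually periodic with period $2$ in each of $i$ and $j$ and become constant once three or more longest paths are present.

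I expect two places to be the main obstacle. The first is getting the hub-move transitions exactly right: the cascade of monochromatic contractions after recoloring a hub is easy to miscount (for instance, that a hub move on $\Routes{1}{1+}{i}{j}{0}$ turns each length-$3$ path into a pendant and each length-$5$ path into a diamond, landing precisely on $T_{i,j}$, is not obvious on first inspection), so I would prove the classification lemma carefully by color parity before touching any nimber. The second is establishing the eventual periodicity rigorously: I must show that the option-value multiset genuinely \emph{stabilizes}, so that the generic rows and columns and the $3+$ regime are justified rather than merely observed, and this stabilization argument has to be run in lock-step across the mutually-referencing tables, which is exactly what makes the single edge-count induction indispensable.
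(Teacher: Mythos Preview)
Your proposal is correct and follows essentially the same route as the paper: strong induction on the size of the position, a move-classification step giving exactly the four option-formulas
\[
\Routes{0}{1+}{i}{j}{k},\ \Routes{1}{1+}{i}{j}{0},\ \Routes{0}{0}{i}{j}{k},\ \Routes{1}{0}{i}{j}{0}\ \longrightarrow\ \text{(the lists you wrote)},
\]
and then a mex computation regime-by-regime in the longest-path count. Your transitions are correct, including the point you flag as delicate (that the hub move on $\Routes{1}{1+}{i}{j}{0}$ lands on $T_{i,j}$). The only presentational difference is that the paper, rather than arguing abstractly that ``increasing a multiplicity by $2$ leaves the option-value multiset unchanged,'' simply writes out the four cells of the repeating $2\times 2$ block in each regime (e.g.\ $i=2k+4,2k+5$ crossed with $j=2l+4,2l+5$) and computes the mex directly from the induction hypothesis; this is the same content as your stabilization argument but packaged as an explicit finite check, which sidesteps having to prove periodicity as a separate lemma.
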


The proof of this lemma is an uninteresting inductive proof with a myriad of cases.  We omit most of this proof, but include some parts in the appendix.

To satisfy the attentive reader, we include a separate (sub)lemma that classifies the \outP-positions:

\begin{Lemma}\label{lemma:disjoint_paths}

The zero values are as shown in the preceding tables.

\end{Lemma}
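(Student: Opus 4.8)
The plan is to reduce everything to a single structural fact: in a properly two-colored graph every vertex is its own monochromatic component, so a move at a vertex $w$ simply recolors $w$ and contracts the star $N[w]$ (that is, $w$ together with all of its neighbors) to one vertex. I would first record how this operation acts on a route position $P_{l_1,\ldots,l_k}$. A move on an interior vertex of a path of length $\ell\ge 2$ shortens that path to length $\ell-2$ and leaves the others fixed (the endpoint-adjacent cases absorb into $u$ or $v$ but have the same net effect); a move on the single interior vertex of a length-$2$ path instead \emph{identifies} $u$ and $v$, turning every other even path into a cycle through the merged vertex and sending the remaining length-$2$ paths to pendants; and a move at $u$ or $v$ shortens \emph{every} path by one, flipping the common parity, after which the new shortest paths collapse under the simple-graph convention (parallel edges, or parallel pendants, merging). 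The essential consequence to establish up front is \textbf{closure}: each move carries a position of one of the nine families of Tables~\ref{table:Routes02461}--\ref{table:Routes35} back into one of those families (or into the $T_{i,j}$ family of Theorem~\ref{thm:diamonds}), and strictly decreases the number of vertices.

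With closure in hand I would run one induction on the number of vertices via the standard outcome criterion. Let $Z$ denote the set of all cells marked $0$ in Tables~\ref{table:Routes02461}--\ref{table:Routes35}, together with the terminal (edgeless) positions and the zeros of Table~\ref{table:Ts}. It then suffices to prove: (i) every terminal position lies in $Z$; (ii) every move from a $Z$-position leaves $Z$; and (iii) every non-$Z$ position in the family has at least one move into $Z$. Because every option is strictly smaller and, by closure, again lies in the family, the induction hypothesis already classifies the options — $Z$-options as genuine $\outP$-positions (value $0$) and non-$Z$-options as $\outN$ (nonzero). Then a $Z$-position has, by (ii), all options nonzero, hence $\operatorname{mex}=0$; and a non-$Z$ position has, by (iii), an option of value $0$, hence nonzero $\operatorname{mex}$. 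Thus (i)--(iii) are exactly the conditions making $Z$ the set of $\outP$-positions, and I would verify (ii) and (iii) by sorting each case according to the three move types above and reading off the needed neighboring cells directly from the tables.

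The main obstacle is bookkeeping rather than any single hard idea. The end moves at $u$ and $v$ couple the even families to the odd families, and the middle-of-a-length-$2$-path move couples the $u\neq v$ tables to the $u=v$ tables and to Theorem~\ref{thm:diamonds}; consequently the nine tables cannot be checked in isolation but must be validated against one another, with constant attention to the multiplicity conventions (the ``$1+$'' columns, and the collapse of parallel shortest paths). Tracking the parities of $i$ and $j$ — precisely the reason each table is split into separate $2k+4$/$2k+5$ and $2l+4$/$2l+5$ blocks — is where the cases proliferate. For condition (iii) I expect that in nearly every non-$Z$ cell a single explicit winning move to a $Z$-cell can be named (usually an end move or one path-shortening). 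For condition (ii) I would exploit symmetry where available: when a $Z$-position carries two paths of equal length, the second player can mirror any interior move on the twin path, which shortcuts those checks. Such pairing never fully suffices, however, because the moves at the shared vertices $u$ and $v$ (and the length-$2$ collapse) cannot be mirrored — witness $C_8=P_{4^2}$, whose zero value must be confirmed by checking that the end moves also leave $Z$ — so the table-lookup verification of (ii) remains the workhorse of the argument.
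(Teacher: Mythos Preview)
Your plan is correct and is essentially the same argument the paper gives: an induction on the order of the graph in which, for every claimed zero position, one checks that each option is an $\outN$-position by exhibiting a response back into the claimed zero set (your condition~(iii) applied to the option). The paper organizes the check by first fixing the zero position and then looping over the move types ($u$, $v$, or an interior vertex on a path of each length) and naming the winning reply in each case, which is exactly your ``sort by the three move types and read off the neighboring cells'' step; your explicit conditions (i)--(iii) and your closure remark make the logical structure cleaner and, as you note, simultaneously deliver the converse (non-zero cells are $\outN$), which the paper's write-up does not state separately.
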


\begin{proof}

We proceed by induction on the order of the graph. Note that the base cases are easily verified. Assume now that all such graphs are in $\Pclass$ for graphs with up to $n$ nodes, and presume that the graph in $G$ has order $n+1$. We examine each case by listing the options of each and demonstrating that all options are $\Nclass$. For the reader's benefit we will denote each option $H$ by the length of the path played on to achieve that option, or by $u$ or $v$ to denote that one of the end nodes is played. Note that playing $u$ leads to the same option as does playing $v$ unless $u$ is adjacent to pendant nodes. 

\begin{itemize}
\item[a] 
	\begin{itemize}
		\item[$u$] In this options, $u\sim v$, there are an even number of $3$-paths, and all other paths have length $5$. $u$ can be played again to a graph of the form $T_{2k,d}$ for some $k\geq 0,d>0$, which is in $\Pclass$. Thus $H\in \Nclass$.
		\item[$2$] $H$ is a graph with a non-zero collection of $6$-cycles and an even number of $4$-cycles joined at $x$. Again, playing $u$ results in $T_{2k,d}, k\geq 0, d>0$. Hence, $H\in \Nclass$.
		\item[$4$] Play in $u$ results in a graph with an odd number of $3$-paths and at least one $5$-path, which is in $\Pclass$ by inductive hypothesis.
		\item[$6$] $H = P_{2,4,6,\ldots ,6}$. Playing on $u$ results in a graph with an odd number of $3$-paths where $u\sim v$, which is in $\Pclass$ by inductive hypothesis. So $H\in \Nclass$.
	\end{itemize}

\item[b]
  If a pendant node is played, then the other pendant node is played in response.
	\begin{itemize}
		\item[$u$] If $u$ was adjacent o $v$, then $H=T_{2k+1,d},d>0$ which is in $\Nclass$ by Thm.~\ref{thm:diamonds}.  If they weren't adjacent, then $H$ has an odd number of $2$-paths and at least one $4$-path.  A play on one of the $2$-paths results in $T_{2k,d},k\geq 0,d>0$, which is in $\Pclass$. Hence, $H\in \Pclass$.
    \item[$v$] If $u$ is adjacent to a pair of pendant nodes, then $H$ is the same as above but with the pendants remaining: $T_{2k+ 3, d}, d > 0$. A play on one of the $2$-paths also results in $T_{2k+2,d},k\geq 0,d>0$, which is in $\Pclass$.
		\item[$3$] There are an even number of $3$-paths and $u\sim v$ in $H$. If $H$ also contains a $5$-path, then a play on $u$ results in $T_{2k,d},k \geq 0, d>0$, which is in $\Pclass$. Otherwise, we either have a single edge (or a star on four nodes) which is in $\Nclass$ or a positive even number of $3$-paths that can be reduced to an odd number, leaving a graph in $\Pclass$ by inductive hypothesis.
		\item[$5$] $H$ has an even number of $3$-paths. Play on one of them results in a position in $\Pclass$ by inductive hypothesis.
	\end{itemize}
	
\item[c] 
	\begin{itemize}
		\item[$u$] $H$ consists solely of at least two $3$-paths. Play on $u$ results in a $K_{2,k}$ for some $k\geq 2$. This is in $\Pclass$ by Thm.~\ref{thm:2paths}.
		\item[$4$] $H = P_{2,4,\ldots ,4}$. Play on the $2$-path results in $T_{0,d},d>0$, which is in $\Pclass$. Hence $H\in \Nclass$.
	\end{itemize}
	
\item[d] 
  If a pendant node is played, then the other pendant node is played in response.
	\begin{itemize}
		\item[$u$] $H$ contains two $3$-paths and at least two $5$-paths. Play on a $3$-path results in a game with an odd number of $3$-paths and with the end points adjacent, 
    which is in $\Pclass$ by inductive hypothesis.
    \item[$v$] If $u$ is adjacent to a pair of pendant nodes, then $H$ is as above with the pendants remaining. A play on a $3$-path is in $\Pclass$ by the same argument.
		\item[$4$] $H$ has a $2$-path, a $4$-path, and at least two $6$-paths. Playing $u$ results in an odd number of $3$-paths and $u \sim v$.  Again, in $\Pclass$ by inductive hypothesis, leading to $H\in \Nclass$. 
		\item[$6$] $H$ has three $4$-paths and at least one $6$-path. A play on $u$, yet again, results in a graph with an odd number of $3$-paths and a $5$-path. So $H\in \Nclass$.
	\end{itemize}
	
\item[e] If $G$ has a single path then this is in $\Pclass$ by Thm.~\ref{thm:paths}. Assume otherwise.
	\begin{itemize}
		\item[$u$] $H$ is a collection of $5$-paths. Play on $u$ results in a graph with all $4$-paths, which is in $\Pclass$ by inductive hypothesis.
		\item[$6$] $H$ has a $4$-path with at least one $6$ path. Play on $u$ results in a $3$-path with at least one $5$-path, which is in $\Pclass$ by inductive hypothesis. Hence $H\in \Nclass$.
	\end{itemize}

\end{itemize}

Since all options of all relevant positions are in $\Nclass$, these positions are in $\Pclass$.
\end{proof}

It is worth noting that a computer search of connected bipartite graphs has yielded no games with grundy value larger than $13$. It remains an open problem whether or not \fc\ has a finite nim dimension.

\section{Real flag values}\label{sec:flags}

Below we examine some real-world flags and their values when played as games of \fc.

The current flag of the United States of America (Fig.~\ref{fig:us}) is easily modeled by a graph with a single blue vertex with fifty white pendant neighbors, and adjacent to eight vertices on a line of thirteen alternating red and white vertices. This game is in $\Pclass$, with an argument as follows:
\begin{itemize}
\item If a white star vertex is turned blue, then another star node can be countered in the same way.
\item If the blue node is turned white, then what remains is a white node with four red pendant neighbors, at the end of an alternating path of length five. Playing $w_6$ results in a path of length three with an even number of pendants, which is in $\Pclass$ by Theorems \ref{thm:paths} and \ref{thm:pendantsPath}. 
\item If the blue node is turned red, then a similar situation arises. What remains is a red node with an odd number of white pendant neighbors (the star nodes and $w_1, w_2, w_3$, at the end of an alternating path of length six. This is in $\Nclass$.
\item Any play on a node along the path can be countered with either another path node, retaining relative parity of the colors, or by playing the blue node to create a path surmounted with appropriate number of pendant nodes.
\end{itemize}

\begin{figure}[H]
\centering
\raisebox{0.4\height}{\fboxsep=0pt \fbox{\includegraphics[width=.45\textwidth]{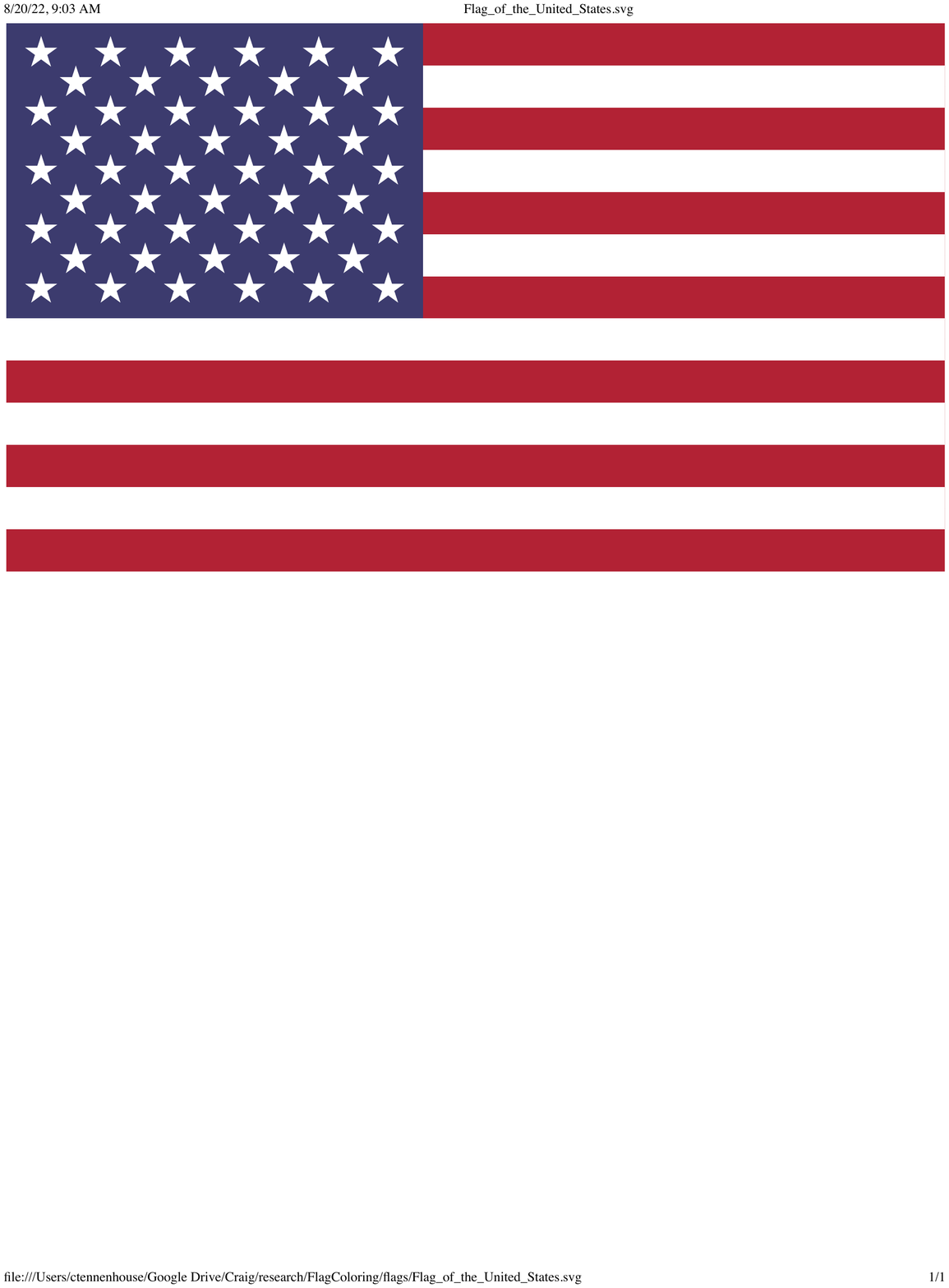}}}
\definecolor{eqeqeq}{rgb}{0.8784313725490196,0.8784313725490196,0.8784313725490196}
\begin{tikzpicture}[line cap=round,line join=round,>=triangle 45,x=1.0cm,y=1.0cm,scale=0.4]
\clip(21.951015325233268,-8.558978418637924) rectangle (36.43726757281635,7.67964305244311);
\draw [line width=1.pt] (30.,0.)-- (30.,5.);
\draw [line width=1.pt] (30.,0.)-- (28.142719914926044,4.700615372496589);
\draw [line width=1.pt] (30.,0.)-- (25.10527992752959,0.);
\draw [line width=1.pt] (32.99094143327038,4.992676909746248)-- (32.99094143327038,-1.0249248702679399);
\draw [line width=1.pt] (32.99094143327038,-1.0249248702679399)-- (32.99094143327038,-6.865021502589905);
\draw [line width=1.pt] (32.99094143327038,4.992676909746248)-- (30.,0.);
\draw [line width=1.pt] (32.99094143327038,-1.0249248702679399)-- (30.,0.);
\begin{scriptsize}
\draw [fill=blue] (30.,0.) circle (7.5pt);
\draw[color=black] (29.369378371374605,-1.0707692156207334) node {$b$};
\draw [fill=white] (30.,5.) circle (7.5pt);
\draw[color=black] (29.917304518797796,6.219335366194817) node {$s_1$};
\draw [fill=white] (28.142719914926044,4.700615372496589) circle (7.5pt);
\draw[color=black] (26.945267612202436,4.9342646022963175) node {$s_2$};
\draw [fill=white] (25.10527992752959,0.) circle (7.5pt);
\draw[color=black] (24.08306454715578,0.9622276957009572) node {$s_{50}$};
\draw [fill=black] (27.024947610626995,2.981895378798362) circle (3.0pt);
\draw [fill=black] (26.623999921227817,2.422535381949249) circle (3.0pt);
\draw [fill=black] (26.39035069142809,1.8127015404005455) circle (3.0pt);
\draw [fill=red] (32.99094143327038,4.992676909746248) circle (7.5pt);
\draw[color=black] (33.89633219874432,6.219335366194817) node {$r_1$};
\draw [fill=white] (32.99094143327038,-1.0249248702679399) circle (7.5pt);
\draw[color=black] (33.95474450619425,-0.7901415277969959) node {$w_4$};
\draw [fill=red] (32.99094143327038,-6.865021502589905) circle (7.5pt);
\draw[color=black] (34.24680604344391,-6.397723042990445) node {$r_7$};
\draw [fill=black] (32.99094143327038,3.1234830713484305) circle (3.0pt);
\draw [fill=black] (32.99094143327038,2.5720615372496587) circle (3.0pt);
\draw [fill=black] (32.99094143327038,1.979052310600819) circle (3.0pt);
\draw [fill=black] (32.99094143327038,-2.754337979392339) circle (3.0pt);
\draw [fill=black] (32.99094143327038,-3.36225747392267) circle (3.0pt);
\draw [fill=black] (32.99094143327038,-4.037415356742159) circle (3.0pt);
\end{scriptsize}
\end{tikzpicture}
\caption{The US flag graph, which is in $\Pclass$}
\label{fig:us}
\end{figure}

Let's look at the Flag of the Portuguese Autonomous Region of the Azores (Fig.~\ref{fig:azores}). If we consider a simplified version wherein the goshawk is a single yellow region, then we get the graph in the figure. An exhaustive search of options (not included) demonstrates that this graph is in $\Nclass$, with a winning first move being the coloring of the red shield blue. From there, given the parity of many of the flag's elements, countermoves are possible for every remaining option. 

\begin{figure}[H]
\centering
\fboxsep=-1pt
\fbox{\includegraphics[width=.45\textwidth]{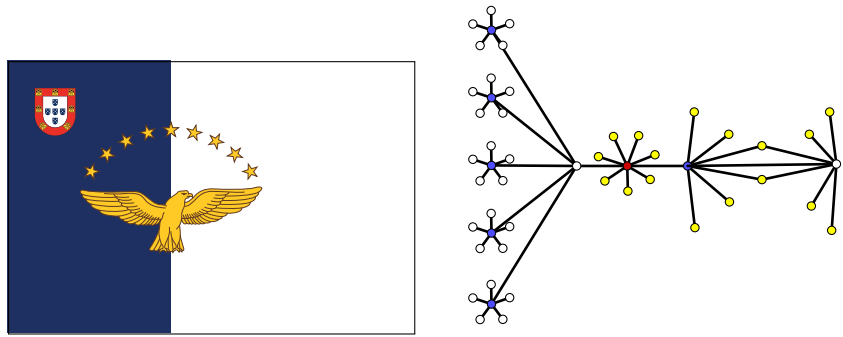}}
\definecolor{ffffqq}{rgb}{1.,1.,0.}
\definecolor{ccqqqq}{rgb}{0.8,0.,0.}
\definecolor{ududff}{rgb}{0.30196078431372547,0.30196078431372547,1.}
\begin{tikzpicture}[line cap=round,line join=round,>=triangle 45,x=1.0cm,y=1.0cm,scale=0.45]
\clip(-3.2,-4.58) rectangle (8.66,5.52);
\draw [line width=1.pt] (-1.99,4.42)-- (-2.,5.);
\draw [line width=1.pt] (-1.99,4.42)-- (-1.4527082165194667,4.610036278950596);
\draw [line width=1.pt] (-1.99,4.42)-- (-1.654463292564949,3.969025444962681);
\draw [line width=1.pt] (-1.99,4.42)-- (-2.32644657044441,3.9628226834506375);
\draw [line width=1.pt] (-1.99,4.42)-- (-2.54,4.6);
\draw [line width=1.pt] (-1.99,2.42)-- (-2.,3.);
\draw [line width=1.pt] (-1.99,2.42)-- (-1.452708216519467,2.6100362789505955);
\draw [line width=1.pt] (-1.99,2.42)-- (-1.6544632925649496,1.969025444962681);
\draw [line width=1.pt] (-1.99,2.42)-- (-2.3264465704444106,1.9628226834506377);
\draw [line width=1.pt] (-1.99,2.42)-- (-2.54,2.6);
\draw [line width=1.pt] (-1.99,0.42)-- (-2.,1.);
\draw [line width=1.pt] (-1.99,0.42)-- (-1.452708216519467,0.610036278950596);
\draw [line width=1.pt] (-1.99,0.42)-- (-1.6544632925649494,-0.030974555037318385);
\draw [line width=1.pt] (-1.99,0.42)-- (-2.32644657044441,-0.03717731654936174);
\draw [line width=1.pt] (-1.99,0.42)-- (-2.54,0.6);
\draw [line width=1.pt] (-1.99,-1.58)-- (-2.,-1.);
\draw [line width=1.pt] (-1.99,-1.58)-- (-1.4527082165194671,-1.3899637210494042);
\draw [line width=1.pt] (-1.99,-1.58)-- (-1.6544632925649498,-2.0309745550373184);
\draw [line width=1.pt] (-1.99,-1.58)-- (-2.3264465704444106,-2.0371773165493616);
\draw [line width=1.pt] (-1.99,-1.58)-- (-2.54,-1.4);
\draw [line width=1.pt] (-1.99,-3.68)-- (-2.,-3.1);
\draw [line width=1.pt] (-1.99,-3.68)-- (-1.4527082165194671,-3.489963721049404);
\draw [line width=1.pt] (-1.99,-3.68)-- (-1.6544632925649498,-4.1309745550373185);
\draw [line width=1.pt] (-1.99,-3.68)-- (-2.3264465704444106,-4.137177316549361);
\draw [line width=1.pt] (-1.99,-3.68)-- (-2.54,-3.5);
\draw [line width=1.pt] (-1.99,4.42)-- (0.5263727416662906,0.40362725833370927);
\draw [line width=1.pt] (0.5263727416662906,0.40362725833370927)-- (-1.99,2.42);
\draw [line width=1.pt] (0.5263727416662906,0.40362725833370927)-- (-1.99,0.42);
\draw [line width=1.pt] (0.5263727416662906,0.40362725833370927)-- (-1.99,-1.58);
\draw [line width=1.pt] (0.5263727416662906,0.40362725833370927)-- (-1.99,-3.68);
\draw [line width=1.pt] (0.5263727416662906,0.40362725833370927)-- (2.0204428080198737,0.40362725833370927);
\draw [line width=1.pt] (2.0204428080198737,0.40362725833370927)-- (1.6143689882213115,1.2748870487326263);
\draw [line width=1.pt] (2.0204428080198737,0.40362725833370927)-- (2.357192940130224,1.2996373309618399);
\draw [line width=1.pt] (2.0204428080198737,0.40362725833370927)-- (2.8396866485686014,0.7343057279911742);
\draw [line width=1.pt] (2.0204428080198737,0.40362725833370927)-- (2.6985225100043477,0.0045984675206399706);
\draw [line width=1.pt] (2.0204428080198737,0.40362725833370927)-- (2.04,-0.34);
\draw [line width=1.pt] (2.0204428080198737,0.40362725833370927)-- (1.36,-0.04);
\draw [line width=1.pt] (2.0204428080198737,0.40362725833370927)-- (1.1705763794764703,0.6786923486358806);
\draw [line width=1.pt] (2.0204428080198737,0.40362725833370927)-- (3.8030146602036674,0.40362725833370927);
\draw [line width=1.pt] (3.8030146602036674,0.40362725833370927)-- (6.,1.);
\draw [line width=1.pt] (6.,1.)-- (8.2,0.46);
\draw [line width=1.pt] (8.2,0.46)-- (6.,0.);
\draw [line width=1.pt] (6.,0.)-- (3.8030146602036674,0.40362725833370927);
\draw [line width=1.pt] (3.8030146602036674,0.40362725833370927)-- (5.02,1.34);
\draw [line width=1.pt] (3.8030146602036674,0.40362725833370927)-- (4.,2.);
\draw [line width=1.pt] (3.8030146602036674,0.40362725833370927)-- (4.02,-1.42);
\draw [line width=1.pt] (3.8030146602036674,0.40362725833370927)-- (5.04,-0.66);
\draw [line width=1.pt] (8.2,0.46)-- (7.4,1.34);
\draw [line width=1.pt] (8.2,0.46)-- (8.,2.);
\draw [line width=1.pt] (8.2,0.46)-- (7.46,-0.78);
\draw [line width=1.pt] (8.2,0.46)-- (8.06,-1.5);
\begin{scriptsize}
\draw [color=black,fill=white] (-2.,5.) circle (3.5pt);
\draw [color=black,fill=white] (-2.54,4.6) circle (3.5pt);
\draw [color=black,fill=white] (-2.32644657044441,3.9628226834506375) circle (3.5pt);
\draw [color=black,fill=white] (-1.654463292564949,3.969025444962681) circle (3.5pt);
\draw [color=black,fill=white] (-1.4527082165194667,4.610036278950596) circle (3.5pt);
\draw [fill=ududff] (-1.99,4.42) circle (3.5pt);
\draw [color=black,fill=white] (-2.,3.) circle (3.5pt);
\draw [color=black,fill=white] (-2.54,2.6) circle (3.5pt);
\draw [color=black,fill=white] (-2.3264465704444106,1.9628226834506377) circle (3.5pt);
\draw [color=black,fill=white] (-1.6544632925649496,1.969025444962681) circle (3.5pt);
\draw [color=black,fill=white] (-1.452708216519467,2.6100362789505955) circle (3.5pt);
\draw [fill=ududff] (-1.99,2.42) circle (3.5pt);
\draw [color=black,fill=white] (-2.,1.) circle (3.5pt);
\draw [color=black,fill=white] (-2.54,0.6) circle (3.5pt);
\draw [color=black,fill=white] (-2.32644657044441,-0.03717731654936174) circle (3.5pt);
\draw [color=black,fill=white] (-1.6544632925649494,-0.030974555037318385) circle (3.5pt);
\draw [color=black,fill=white] (-1.452708216519467,0.610036278950596) circle (3.5pt);
\draw [fill=ududff] (-1.99,0.42) circle (3.5pt);
\draw [color=black,fill=white] (-2.,-1.) circle (3.5pt);
\draw [color=black,fill=white] (-2.54,-1.4) circle (3.5pt);
\draw [color=black,fill=white] (-2.3264465704444106,-2.0371773165493616) circle (3.5pt);
\draw [color=black,fill=white] (-1.6544632925649498,-2.0309745550373184) circle (3.5pt);
\draw [color=black,fill=white] (-1.4527082165194671,-1.3899637210494042) circle (3.5pt);
\draw [fill=ududff] (-1.99,-1.58) circle (3.5pt);
\draw [color=black,fill=white] (-2.,-3.1) circle (3.5pt);
\draw [color=black,fill=white] (-2.54,-3.5) circle (3.5pt);
\draw [color=black,fill=white] (-2.3264465704444106,-4.137177316549361) circle (3.5pt);
\draw [color=black,fill=white] (-1.6544632925649498,-4.1309745550373185) circle (3.5pt);
\draw [color=black,fill=white] (-1.4527082165194671,-3.489963721049404) circle (3.5pt);
\draw [fill=ududff] (-1.99,-3.68) circle (3.5pt);
\draw [color=black,fill=white] (0.5263727416662906,0.40362725833370927) circle (3.5pt);
\draw [fill=ccqqqq] (2.0204428080198737,0.40362725833370927) circle (3.5pt);
\draw [fill=ffffqq] (1.36,-0.04) circle (3.5pt);
\draw [fill=ffffqq] (2.04,-0.34) circle (3.5pt);
\draw [fill=ffffqq] (2.6985225100043477,0.0045984675206399706) circle (3.5pt);
\draw [fill=ffffqq] (2.8396866485686014,0.7343057279911742) circle (3.5pt);
\draw [fill=ffffqq] (2.357192940130224,1.2996373309618399) circle (3.5pt);
\draw [fill=ffffqq] (1.6143689882213115,1.2748870487326263) circle (3.5pt);
\draw [fill=ffffqq] (1.1705763794764703,0.6786923486358806) circle (3.5pt);
\draw [fill=ududff] (3.8030146602036674,0.40362725833370927) circle (3.5pt);
\draw [line width=1.pt] (3.8030146602036674,0.40362725833370927)-- (8.2,0.46);
\draw [fill=ffffqq] (6.,1.) circle (3.5pt);
\draw [color=black,fill=white] (8.2,0.46) circle (3.5pt);
\draw [fill=ffffqq] (6.,0.) circle (3.5pt);
\draw [fill=ffffqq] (5.02,1.34) circle (3.5pt);
\draw [fill=ffffqq] (4.,2.) circle (3.5pt);
\draw [fill=ffffqq] (4.02,-1.42) circle (3.5pt);
\draw [fill=ffffqq] (5.04,-0.66) circle (3.5pt);
\draw [fill=ffffqq] (7.4,1.34) circle (3.5pt);
\draw [fill=ffffqq] (8.,2.) circle (3.5pt);
\draw [fill=ffffqq] (7.46,-0.78) circle (3.5pt);
\draw [fill=ffffqq] (8.06,-1.5) circle (3.5pt);
\end{scriptsize}
\end{tikzpicture}
\caption{The graph associated with the flag of the Azores, (simplified)}
\label{fig:azores}
\end{figure}

Classes and Grundy values of flags with much simpler designs are easier to determine. For example, it's clear from inspection that the flags of Canada, France, and the modernized historical Maine flag (Fig.~\ref{fig:other_flags}) have the values $*,0,0$, respectively.

\begin{figure}[H]
\centering
\fboxsep=0pt
\fbox{\includegraphics[width=0.4\textwidth]{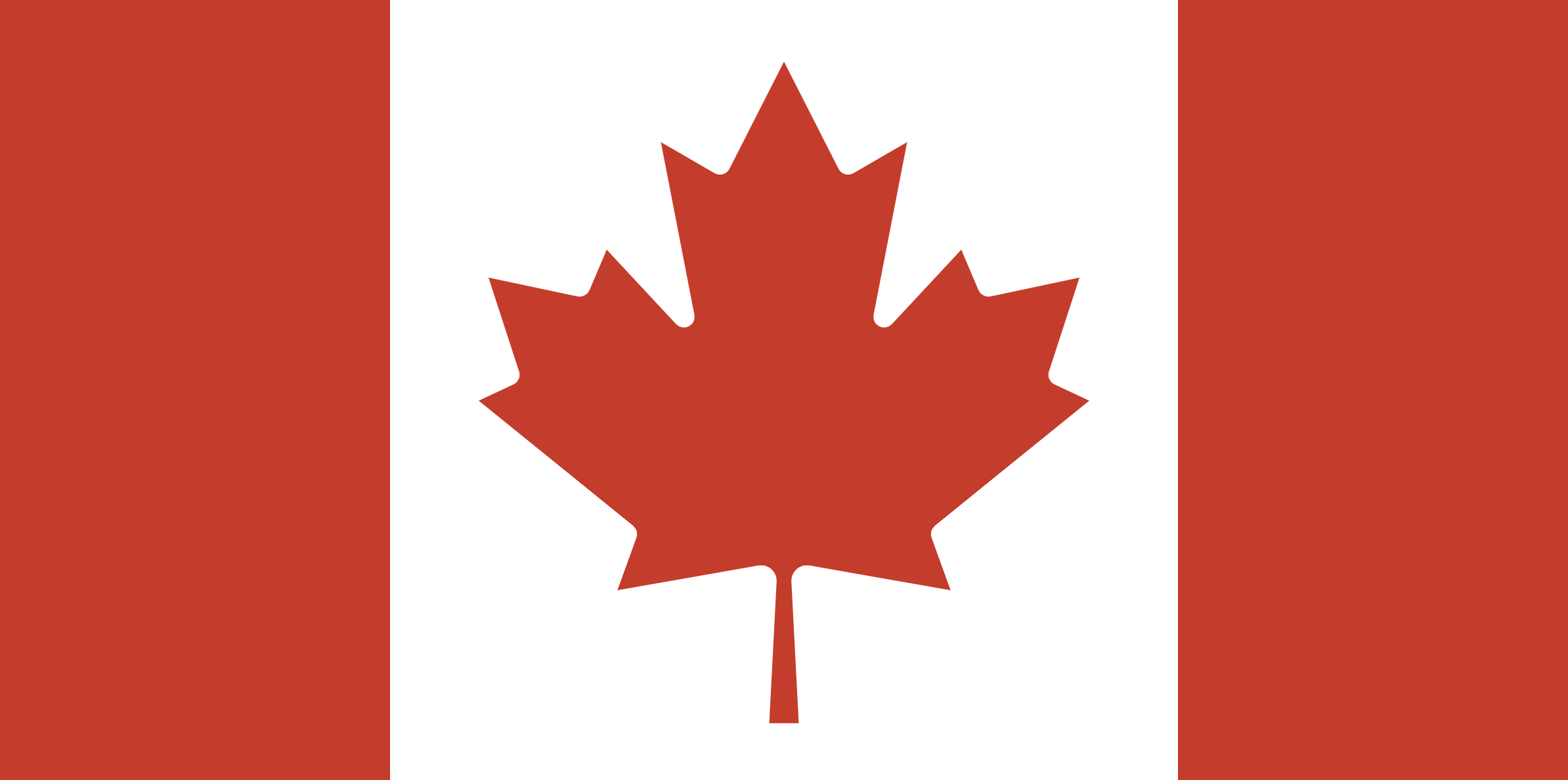}}
\hspace{.5cm}
\fboxsep=0pt
\fbox{\includegraphics[width=0.3\textwidth]{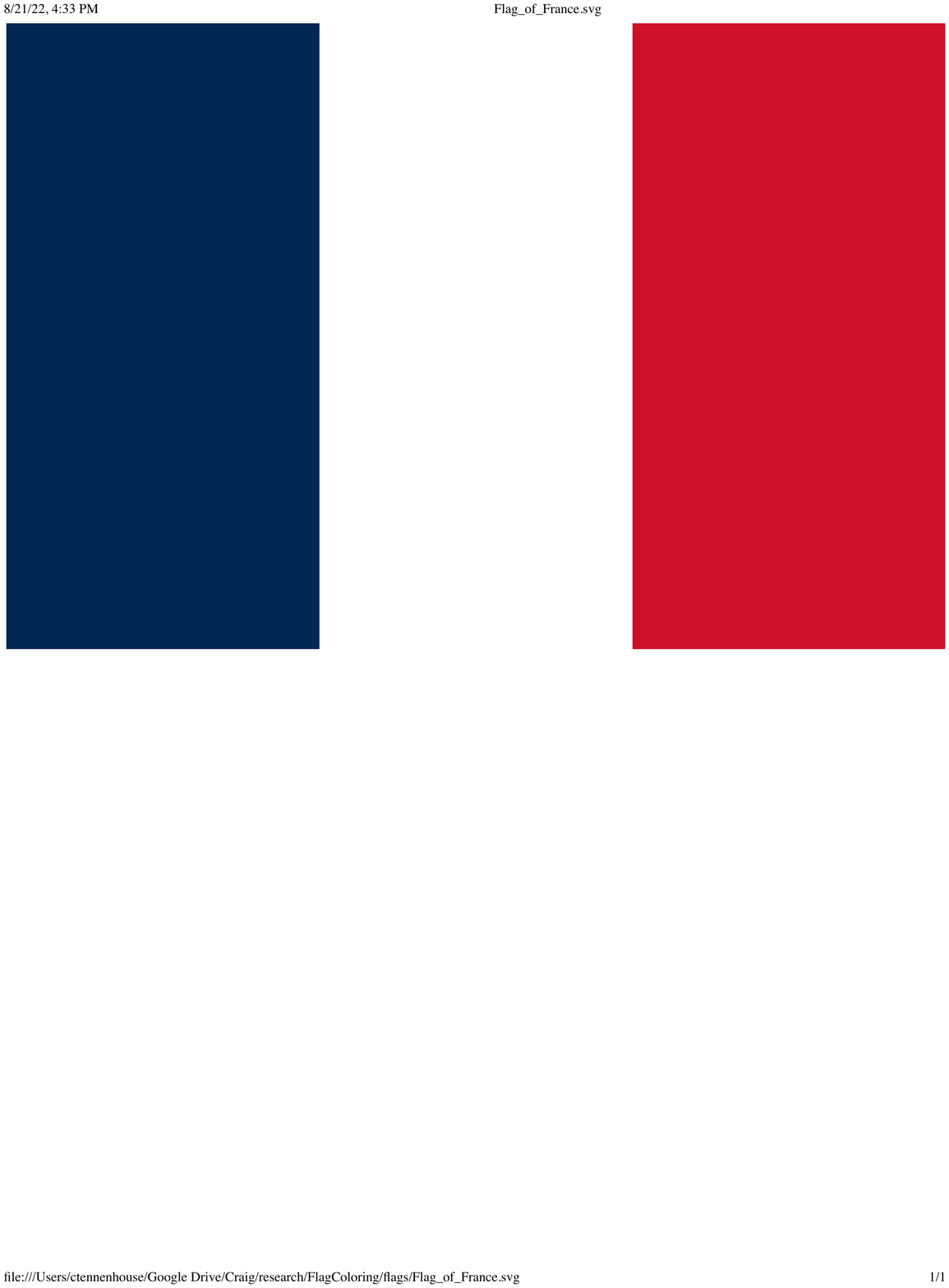}}
\vspace{.5cm}

\fboxsep=0pt
\fbox{\includegraphics[width=0.3\textwidth]{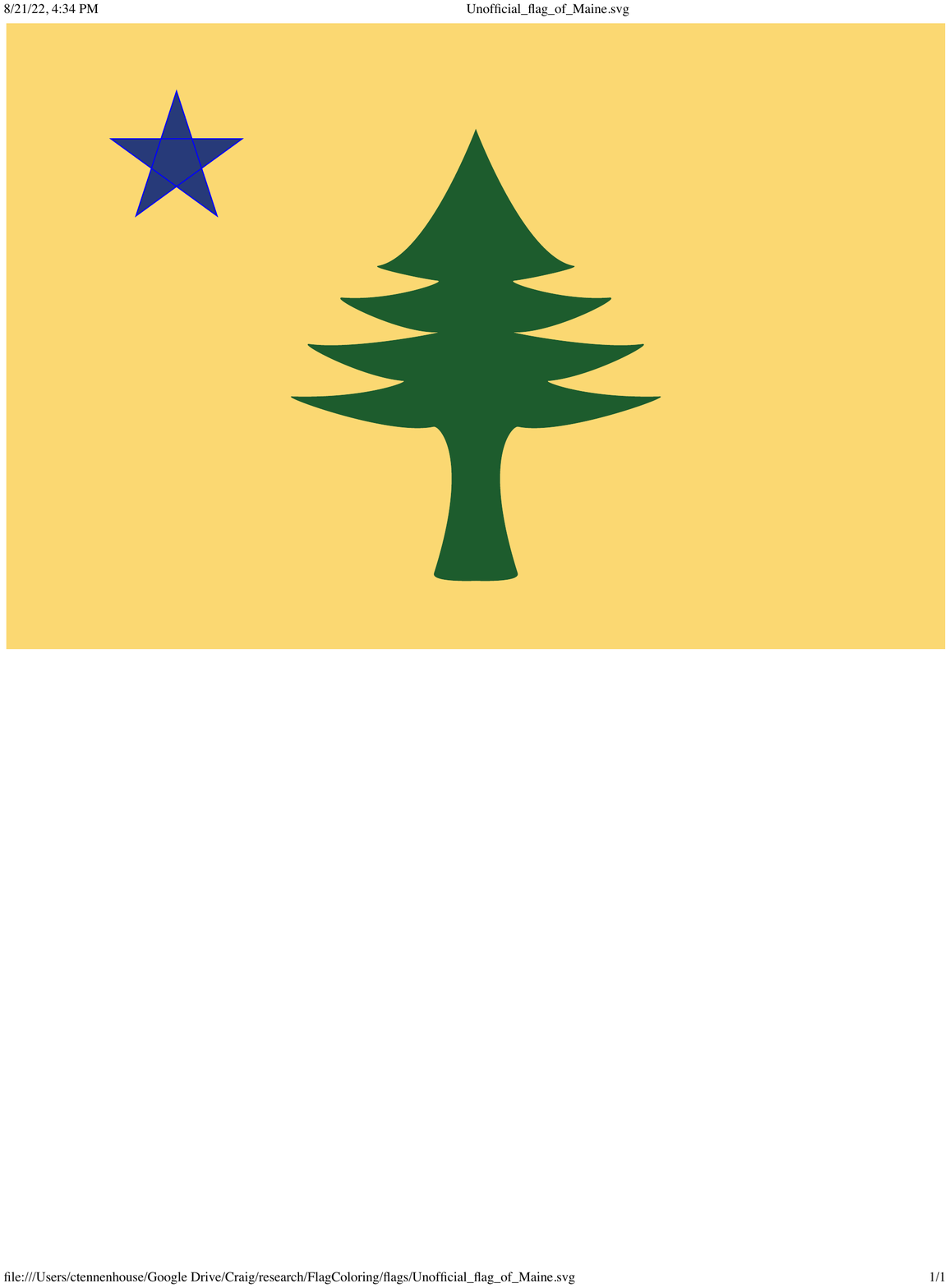}}
\caption{Flag of Canada, Flag of France, Modernized historical flag of Maine}
\label{fig:other_flags}
\end{figure}

\section{Computational Complexity}\label{sec:complexity}

We now turn to the computational complexity of \fc. First, we remind the reader of another game, \at, known to be \cclass{PSPACE}-complete \cite{DBLP:journals/jcss/Schaefer78}.

\begin{Ruleset}
\at~is an impartial game played on a Positive (no negations) CNF formula of boolean variables.  Initially, all variables are assigned the value False.  A turn consists of picking a single variable still assigned False, and switching the value to True, with the condition that the formula continues to evaluate to False.  A variable change that would cause the entire formula to become True cannot be selected.
\end{Ruleset}

\begin{Theorem}\label{thm:complexity}
The game \fc~is \cclass{PSPACE}-hard.
\end{Theorem}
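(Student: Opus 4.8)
The plan is to reduce from \at, which is \cclass{PSPACE}-complete by \cite{DBLP:journals/jcss/Schaefer78}. Given a positive CNF formula $\phi = C_1 \wedge \cdots \wedge C_m$ over variables $x_1,\ldots,x_n$, I would construct in polynomial time a properly two-colored bipartite graph $G_\phi$ so that the first player wins two-color \fc\ on $G_\phi$ if and only if the first player wins \at\ on $\phi$. The construction is assembled from the route gadgets of Lemma~\ref{lemma:routeNimbers}: pairs of hub vertices joined by internally disjoint paths, where a single move on a path shortens it by one (shifting it between the length classes tracked in Tables~\ref{table:Routes02461}--\ref{table:Routes35}) while a move on a hub collapses every incident path simultaneously. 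Each variable $x_i$ is represented by a gadget whose path parities encode the truth value False, and each clause $C_j$ is represented by a hub to which the paths of its literal occurrences attach, so that a clause becomes ``satisfied'' precisely when one of its variable-paths is consumed by the distinguished move that simulates setting a variable to True.

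The key point is to choose the path lengths so that the parity arithmetic of the route tables makes the only \emph{advancing} moves the ones that correspond to flipping a variable to True, and so that the analogue of the forbidden \at\ move is automatically a losing move rather than an illegal one. Concretely, the \fc\ move that would satisfy every remaining all-False clause at once should land in an \outN\ position, i.e.\ a position from which the opponent, now on move, wins; by normal-play convention no correct play would ever choose it, so the ``avoid true'' restriction is enforced without forbidding any move and the \outP/\outN\ status of each simulated position is preserved. Because \fc\ is impartial and played under normal play, it suffices to track membership in \outP\ versus \outN, so I only need the \Pclass-classification of Lemma~\ref{lemma:disjoint_paths} (not the full nimber values) to certify that the genuine variable-flips lead to positions of the correct type and that the forbidden collapse lands in \outN.

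To close the correspondence I would give the second player a pairing (response) strategy that neutralizes every \fc\ move with no \at\ analogue---moves on the ``wrong'' end of a path, premature hub collapses, and similar stray options---by an immediate reply that returns play to a position lying in the image of the simulation, so that reachable lines of play stay in bijection with lines of \at. \textbf{Verifying that every stray move admits such a canonical reply, consistently across the whole graph, is the crux of the argument}; it is exactly the bookkeeping the route tables are designed to support, and it is the step most likely to hide subtle cases, such as interactions when a single hub is adjacent to several partially played clause gadgets at once. Finally I would check that $G_\phi$ has size polynomial in $n+m$ and is constructible in polynomial time, completing the \cclass{PSPACE}-hardness reduction; combined with membership in \cclass{PSPACE} this pins down the complexity of \fc.
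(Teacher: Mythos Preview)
Your high-level strategy---reduce from \at\ and use the route-graph \outP-classification of Lemma~\ref{lemma:disjoint_paths} to show that every \fc\ move with no \at\ analogue can be answered into a \outP-position---is exactly the paper's approach. But what you have written is a plan, not a proof: you never commit to a concrete $G_\phi$, and you explicitly defer the case analysis that you yourself identify as ``the crux.'' That deferral is a genuine gap, because the reduction only works after several non-obvious numerical choices are made, and those choices are invisible at the level of abstraction you remain at.

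In the paper's actual construction, variables are not route gadgets; each $x_i$ is a single white vertex sitting between two black hubs $x,x'$, and setting $x_i$ to True is simply recoloring that one vertex black. The route machinery lives entirely on the clause side: every clause vertex is given a length-$5$ tail to a common white sink $y$, so that after enough play the position collapses to one of the $P_{l_1,l_2,\ldots}$ graphs covered by the tables. The tricks your sketch does not anticipate are (i) replacing each clause by four identical copies, and (ii) adjoining three extra ``universal'' clause vertices $C_\gamma,C_\delta,C_\epsilon$ adjacent to every variable. The quadrupling guarantees that after a stray move there are still enough long paths left to land in the right cell of the route tables, and the universal clauses supply the canonical reply your pairing strategy needs---there is always an available $C_\gamma$ (or $C_\delta$) whose play drops the position onto a specific \outP\ entry. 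The total clause count $4n+3$ is deliberately odd for the same parity reasons. Until parameters like these are fixed and the resulting residual graphs are matched, case by case, against Lemma~\ref{lemma:disjoint_paths}, the ``stray moves admit canonical replies'' step is an assertion rather than an argument.
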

\begin{proof}
Our goal is to reduce any game of \at~to a game of \fc, demonstrating that \fc~is at least as hard as \at. Since \at\ is \cclass{PSPACE}-hard \cite{DBLP:journals/jcss/Schaefer78}, \fc\ is also \cclass{PSPACE}-hard.  We continue by showing the reduction from \at\ to \fc.

Let $A$ be an instance of \at~with boolean variables $\{x_1,\ldots ,x_k\}$ and clauses $\{C_1,\ldots ,C_n\}$. Define the two-colored graph $G(A)$ in the following way.

Let $x$ and $x'$ be black nodes, and let $X=\{x_1,\ldots ,x_k\}$ be white \emph{variable nodes} representing the variables in $A$ adjacent to both $x$ and $x'$. Next, let $C_1,\ldots ,C_n$ be black nodes representing the clauses from $A$, adjacent to $X$ as determined by $A$. Clone each three times, so there are four copies of $C_i$ for each $i$ - $\{C_i,C_i^2,C_i^3,C_i^4\}$. We also include black nodes $C_\gamma, C_\delta, C_\epsilon$, all of which are adjacent to all of $X$. Thus, we have $4n+3$ \emph{clause nodes} in $C$. For each clause node $C_j\in C$ we create the path $C_ja_jb_jc_jd_j$, properly two-colored as necessary. Finally, adjoin every $d_j$ to a white vertex $y$. See Fig.~\ref{fig:reduction}.

\begin{figure}[H]
\centering
\begin{tikzpicture}[line cap=round,line join=round,>=triangle 45,x=1.0cm,y=1.0cm,scale=1.25]
\clip(-3.62,1.46) rectangle (4.74,6.88);
\draw [line width=1.pt] (-3.,4.5)-- (-2.,6.);
\draw [line width=1.pt] (-3.,4.5)-- (-2.,5.);
\draw [line width=1.pt] (-3.,4.5)-- (-2.,2.);
\draw [line width=1.pt] (-3.,3.5)-- (-2.,6.);
\draw [line width=1.pt] (-3.,3.5)-- (-2.,5.);
\draw [line width=1.pt] (-3.,3.5)-- (-2.,2.);
\draw [line width=1.pt] (-1.,6.)-- (0.,6.);
\draw [line width=1.pt] (0.,6.)-- (1.,6.);
\draw [line width=1.pt] (1.,6.)-- (2.,6.);
\draw [line width=1.pt] (2.,6.)-- (3.,6.);
\draw [line width=1.pt] (-1.,5.)-- (0.,5.);
\draw [line width=1.pt] (0.,5.)-- (1.,5.);
\draw [line width=1.pt] (1.,5.)-- (2.,5.);
\draw [line width=1.pt] (2.,5.)-- (3.,5.);
\draw [line width=1.pt] (-1.,2.)-- (0.,2.);
\draw [line width=1.pt] (0.,2.)-- (1.,2.);
\draw [line width=1.pt] (1.,2.)-- (2.,2.);
\draw [line width=1.pt] (2.,2.)-- (3.,2.);
\draw [line width=1.pt] (3.,2.)-- (4.,4.);
\draw [line width=1.pt] (4.,4.)-- (3.,5.);
\draw [line width=1.pt] (3.,6.)-- (4.,4.);
\draw [fill=black] (-3.,4.5) circle (2.5pt);
\draw[color=black] (-3.32,4.73) node {$x$};
\draw [fill=black] (-3.,3.5) circle (2.5pt);
\draw[color=black] (-3.32,3.71) node {$x'$};
\draw [color=black,fill=white] (2.,6.) circle (2.5pt);
\draw[color=black] (2.03,6.26) node {$c_1$};
\draw [color=black,fill=white] (2.,5.) circle (2.5pt);
\draw[color=black] (1.99,5.32) node {$c_2$};
\draw [color=black,fill=white] (2.,2.) circle (2.5pt);
\draw[color=black] (2.45,2.26) node {$c_\epsilon$};
\draw [color=black,fill=white] (-2.,6.) circle (2.5pt);
\draw[color=black] (-2.03,6.24) node {$x_1$};
\draw [color=black,fill=white] (-2.,5.) circle (2.5pt);
\draw[color=black] (-2.05,5.3) node {$x_2$};
\draw [color=black,fill=white] (-2.,2.) circle (2.5pt);
\draw[color=black] (-1.81,2.28) node {$x_k$};
\draw [fill=black] (-1.,6.) circle (2.5pt);
\draw[color=black] (-1.05,6.38) node {$C_1$};
\draw [fill=black] (-1.,5.) circle (2.5pt);
\draw[color=black] (-1.07,5.4) node {$C_2$};
\draw [fill=black] (-1.,2.) circle (2.5pt);
\draw[color=black] (-0.63,2.4) node {$C_\epsilon$};
\draw [fill=black] (1.,6.) circle (2.5pt);
\draw[color=black] (1.03,6.34) node {$b_1$};
\draw [fill=black] (1.,5.) circle (2.5pt);
\draw[color=black] (1.03,5.38) node {$b_2$};
\draw [fill=black] (1.,2.) circle (2.5pt);
\draw[color=black] (1.45,2.36) node {$b_\epsilon$};
\draw [color=black,fill=white] (0.,6.) circle (2.5pt);
\draw[color=black] (-0.03,6.26) node {$a_1$};
\draw [color=black,fill=white] (0.,5.) circle (2.5pt);
\draw[color=black] (-0.03,5.28) node {$a_2$};
\draw [color=black,fill=white] (0.,2.) circle (2.5pt);
\draw[color=black] (0.41,2.3) node {$a_\epsilon$};
\draw [fill=black] (3.,6.) circle (2.5pt);
\draw[color=black] (2.99,6.34) node {$d_1$};
\draw [fill=black] (3.,5.) circle (2.5pt);
\draw[color=black] (2.99,5.38) node {$d_2$};
\draw [fill=black] (3.,2.) circle (2.5pt);
\draw[color=black] (3.41,2.36) node {$d_\epsilon$};
\draw [color=black,fill=white] (4.,4.) circle (2.5pt);
\draw[color=black] (4.36,4.23) node {$y$};
\end{tikzpicture}
\caption{Construction of the game $G(A)$ from the \at~game $A$}
\label{fig:reduction}
\end{figure}
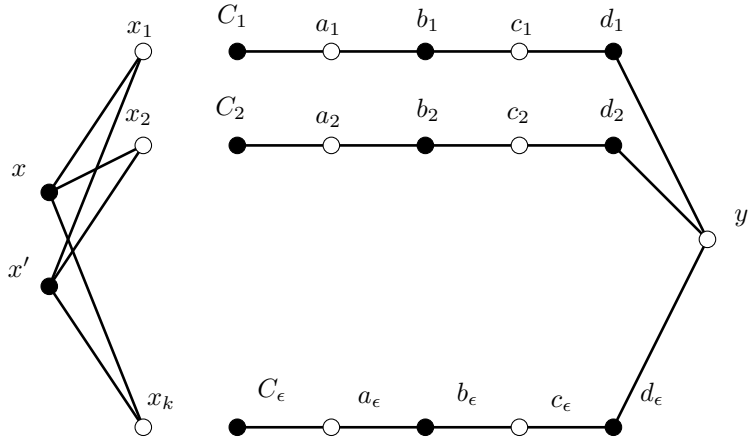

Any legal move in $A$ is associated with the coloring of some $x_j$ in $G(A)$. A game in $A$ is over when a player cannot move without selecting any remaining variable without every clause containing a chosen variable. In $G(A)$ this is associated with every clause node being \emph{saturated}, i.e. adjacent to a black variable node. We now demonstrate that every move in $G(A)$ that is not associated with a legal move in $A$ results in a \fc ~position in $\Nclass$.

If every clause node is saturated, then $G(A)$ is a black node $x$ with a pendant for every remaining white variable node. The vertices $x$ and $y$ are then joined by $2n+3$ paths of length $5$. Playing on $x$ results in a graph of the form $P_{4,\ldots ,4}$, which is in $\Pclass$ by Lemma~\ref{lemma:disjoint_paths}. 

Next we examine the case where something other than $x_i$ is played while all of $X$ is still white. If $C_i$ is played, then a move on $C_\gamma$ (or $C_\delta$ if $i=\gamma$) results in $P_{4,4,6,\ldots ,6}$ with a pair of pendants for the vertices $x$ and $x'$. This is in $\Pclass$ by Lemma~\ref{lemma:disjoint_paths}. Similarly, if any vertex in $\{a_i,b_i,c_i,d_i\}$ is played then $C_\gamma$ (or $C_\delta$ if $i=\gamma$) results in the same graph in $\Pclass$.If $x$ is played then playing $x'$ results in $P_{6,\ldots ,6}$ which is in $\Pclass$ by Lemma~\ref{lemma:disjoint_paths}. Finally, if $y$ is played, then $C_\gamma$ results in $P_{3,5,\ldots 5}$ with two pendants for $x$ and $x'$, which is in $\Pclass$ by Lemma~\ref{lemma:disjoint_paths}.

Now we examine the case where at least one vertex in $X$ is black. If $x$ is played (or any vertex with a black path to $x$), then playing an unsaturated vertex $C_i$ in $C$ results in $P_{4,\ldots ,4,6,\ldots ,6}$, where there are an even number of $4$-paths (associated with the saturated clause nodes along with $C_i$) and an odd number of $6$-paths (associated with the unsaturated clause nodes, excluding $C_i$). There are at least two $6$-paths due to the extra copies of $C_i$, so this is in $\Pclass$ by Lemma~\ref{lemma:disjoint_paths}. The same argument demonstrates that playing an unsaturated $C_i$ or a vertex in $\{a_i,b_i,c_i,d_i\}$ first can be countered with $C_\gamma$. If a vertex in $\{a_i,b_i,c_i,d_i\}$  is played where $C_i$ is saturated, then a play on $C_\gamma$ results in $P_{2,4,\ldots ,4,6,\ldots ,6}$, where the number of $4$-paths is even, which is in $\Pclass$ by the Lemma. Finally, if $y$ is played then a play on $C_\gamma$ results in $P_{3,\ldots ,3,5,\ldots ,5}$, where there are an odd number of $3$-paths (associated with saturated clause nodes). This is in $\Pclass$ by Lemma~\ref{lemma:disjoint_paths}. 

Hence, any move in $G(A)$ associated with an illegal move in $A$ results in a position easily moved to $\Pclass$ by the next player, and therefore any game of \at~can be reduced to a game of \fc. 
\end{proof}

\section{Conclusion and open questions}\label{sec:conclusion}
As we have seen, \fc\ is simple to play yet yields interesting mathematical properties. It joins the ranks of other PSPACE-complete games, and thus provides yet another tool in the box for complexity analysis of other games and algorithms. As noted in Section \ref{sec:intro}, it remains to be determined whether or not \fc\ nimbers are bounded (i.e. if it has finite \emph{nim dimension}). As we have been as yet unable to find a simpler reduction, we also suspect that \fc\ might be in P if restricted to trees and possibly even planar graphs. Similarly, the question of values of grid graphs would make for an interesting addition to the results. We list these open problems below for the ambitious reader.

\begin{Open}
Is the nim dimension of $2$-color \fc\ finite?
\end{Open}

\begin{Open}
Is the nim dimension of $n$-color \fc\ finite?
\end{Open}

\begin{Open}
Is \fc\ in P when restricted to planar graphs or trees?
\end{Open}

\begin{Open}
What are the values of $n$-color \fc\ games on grid graphs?
\end{Open}

\bibliographystyle{plainurl} 

\begin{thebibliography}{1}

\bibitem{LessonsInPlay:2007}
M.~H. Albert, R.~J. Nowakowski, and D.~Wolfe.
\newblock {\em Lessons in Play: An Introduction to Combinatorial Game Theory}.
\newblock A. K. Peters, Wellesley, Massachusetts, 2007.

\bibitem{WinningWays:2001}
Elwyn~R. Berlekamp, John~H. Conway, and Richard~K. Guy.
\newblock {\em Winning Ways for your Mathematical Plays}, volume~1.
\newblock A K Peters, Wellesley, Massachsetts, 2001.

\bibitem{fellows2018survey}
Michael~R Fellows, Frances~A Rosamond, Maise Dantas~da Silva, and
  U{\'e}verton~S Souza.
\newblock A survey on the complexity of flood-filling games.
\newblock In {\em Adventures Between Lower Bounds and Higher Altitudes}, pages
  357--376. Springer, 2018.

\bibitem{fleischer2012algorithmic}
Rudolf Fleischer and Gerhard~J Woeginger.
\newblock An algorithmic analysis of the honey-bee game.
\newblock {\em Theoretical Computer Science}, 452:75--87, 2012.

\bibitem{Grundy:1939}
P.~M. Grundy.
\newblock Mathematics and games.
\newblock {\em Eureka}, 2:198---211, 1939.

\bibitem{meeks2013complexity}
Kitty Meeks and Alexander Scott.
\newblock The complexity of free-flood-it on 2$\times$ n boards.
\newblock {\em Theoretical Computer Science}, 500:25--43, 2013.

\bibitem{DBLP:journals/jcss/Schaefer78}
Thomas~J. Schaefer.
\newblock On the complexity of some two-person perfect-information games.
\newblock {\em Journal of Computer and System Sciences}, 16(2):185--225, 1978.

\bibitem{Sprague:1936}
R.~P. Sprague.
\newblock \"{U}ber mathematische {K}ampfspiele.
\newblock {\em T\^{o}hoku Mathematical Journal}, 41:438---444, 1935-36.

\bibitem{vu2019extremal}
Dominik~K Vu and Kitty Meeks.
\newblock Extremal properties of flood-filling games.
\newblock {\em Discrete Mathematics \& Theoretical Computer Science}, 21, 2019.

\end{thebibliography}

\appendix
\section{Long Proofs}

Parts of proof of Lemma \ref{lemma:routeNimbers}.
\begin{proof}

We omit the base cases for the tables, which are needed to show the top rows and left-hand columns.  These are not trivial, as they require inductive arguments of their own.  For reasons of brevity, we show only the inductive cases.

We begin by writing the four main categories of positions as the impartial position consisting of their options.  Each position has one of three or four possible moves: change the color of either $u$ or $v$ (this shortens the lengths of all paths by 1, unless $u = v$ in which case all paths are shortened by 2), or change the color of a (non-$u$-or-$v$) vertex along one of the paths (this shortens that path length by 2).  Thus, we have the following characterizations, listed in the order that we prove each of them:

\begin{itemize}
    \item $\Routes{0}{1+}{i}{j}{k} = \impartialGameSet{\Routes{0}{1+}{j}{k}{0}, \Routes{0}{1+}{i-1}{j}{k}, \Routes{0}{1+}{i+1}{j-1}{k}, \Routes{0}{1+}{i}{j+1}{k-1}}$
    
    \item $\Routes{1}{1+}{i}{j}{0} = \impartialGameSet{\Routes{0}{1+}{i}{j}{0}, \Routes{1}{1+}{i-1}{j}{0}, \Routes{1}{1+}{i+1}{j-1}{0}}$
    
    \item $\Routes{0}{0}{i}{j}{k} = \impartialGameSet{\Routes{1}{1+}{j}{k}{0}, \Routes{0}{1+}{i-1}{j}{k}, \Routes{0}{0}{i+1}{j-1}{k}, \Routes{0}{0}{i}{j+1}{k-1}}$
    
    \item $\Routes{1}{0}{i}{j}{0} = \impartialGameSet{\Routes{0}{0}{i}{j}{0}, \Routes{1}{1+}{i-1}{j}{0}, \Routes{1}{0}{i+1}{j-1}{0}}$
\end{itemize}

For each of the tables, there is a repeating $2 \times 2$ block as $i$ and $j$ grow.  To prove each of those repeats, we recursively calculate them based on the mex of their options.

Beginning with the $\Routes{0}{1+}{i}{j}{k}$ case, we prove this for increasing the number of length-six paths as the tables do.  (We don't need to cover the zero case, as $\Routes{0}{1+}{i}{j}{0} = T_{i,j}$.)  For greater values, we prove the even and odd cases for both $i$ and $j$ in Table \ref{table:Routes02461}:
\begin{align*}
    \Routes{0}{1+}{2k+4}{2l+4}{1} 
        &= \impartialGameSet{\Routes{0}{1+}{2l+4}{1}{0}, \Routes{0}{1+}{2k+3}{2l+4}{1}, \Routes{0}{1+}{2k+5}{2l+3}{1}, \Routes{0}{1+}{2k+4}{2l+5}{0}}\\
        &= \impartialGameSet{0, \ast 3, 0, 0} \\
        &= \ast\ \\
    \Routes{0}{1+}{2k+4}{2l+5}{1} 
        &= \impartialGameSet{\Routes{0}{1+}{2l+5}{1}{0}, \Routes{0}{1+}{2k+3}{2l+5}{1}, \Routes{0}{1+}{2k+5}{2l+4}{1}, \Routes{0}{1+}{2k+4}{2l+6}{0}}\\
        &= \impartialGameSet{\ast 3, 0, \ast 3, 0} \\
        &= \ast\ \\
    \Routes{0}{1+}{2k+5}{2l+4}{1} 
        &= \impartialGameSet{\Routes{0}{1+}{2l+4}{1}{0}, \Routes{0}{1+}{2k+4}{2l+4}{1}, \Routes{0}{1+}{2k+6}{2l+3}{1}, \Routes{0}{1+}{2k+5}{2l+5}{0}}\\
        &= \impartialGameSet{0, \ast, \ast, \ast 2} \\
        &= \ast 3\ \\
    \Routes{0}{1+}{2k+5}{2l+5}{1} 
        &= \impartialGameSet{\Routes{0}{1+}{2l+5}{1}{0}, \Routes{0}{1+}{2k+4}{2l+5}{1}, \Routes{0}{1+}{2k+6}{2l+4}{1}, \Routes{0}{1+}{2k+5}{2l+6}{0}}\\
        &= \impartialGameSet{\ast 3, \ast, \ast, \ast } \\
        &= 0\ \\
\end{align*}

Next, for 2 length-six paths in Table \ref{table:Routes02462} $\Routes{0}{1+}{i}{j}{2}$:
\begin{align*}
    \Routes{0}{1+}{2k+4}{2l+4}{2} 
        &= \impartialGameSet{\Routes{0}{1+}{2l+4}{2}{0}, \Routes{0}{1+}{2k+3}{2l+4}{2}, \Routes{0}{1+}{2k+5}{2l+3}{2}, \Routes{0}{1+}{2k+4}{2l+5}{1}}\\
        &= \impartialGameSet{0, \ast, \ast 4, \ast} \\
        &= \ast 2\ \\
    \Routes{0}{1+}{2k+4}{2l+5}{2} 
        &= \impartialGameSet{\Routes{0}{1+}{2l+5}{2}{0}, \Routes{0}{1+}{2k+3}{2l+5}{2}, \Routes{0}{1+}{2k+5}{2l+4}{2}, \Routes{0}{1+}{2k+4}{2l+6}{1}}\\
        &= \impartialGameSet{\ast, \ast 4, \ast, \ast} \\
        &= 0\ \\
    \Routes{0}{1+}{2k+5}{2l+4}{2} 
        &= \impartialGameSet{\Routes{0}{1+}{2l+4}{2}{0}, \Routes{0}{1+}{2k+4}{2l+4}{2}, \Routes{0}{1+}{2k+6}{2l+3}{2}, \Routes{0}{1+}{2k+5}{2l+5}{1}}\\
        &= \impartialGameSet{0, \ast 2, 0, 0} \\
        &= \ast\ \\
    \Routes{0}{1+}{2k+5}{2l+5}{2} 
        &= \impartialGameSet{\Routes{0}{1+}{2l+5}{2}{0}, \Routes{0}{1+}{2k+4}{2l+5}{2}, \Routes{0}{1+}{2k+6}{2l+4}{2}, \Routes{0}{1+}{2k+5}{2l+6}{1}}\\
        &= \impartialGameSet{\ast, 0,  \ast 2, \ast 3} \\
        &= \ast 4\ \\
\end{align*}

Next, for 3 six-length paths as in Table \ref{table:Routes02463} $\Routes{0}{1+}{i}{j}{3}$:
\begin{align*}
    \Routes{0}{1+}{2k+4}{2l+4}{3} 
        &= \impartialGameSet{\Routes{0}{1+}{2l+4}{3}{0}, \Routes{0}{1+}{2k+3}{2l+4}{3}, \Routes{0}{1+}{2k+5}{2l+3}{3}, \Routes{0}{1+}{2k+4}{2l+5}{2}}\\
        &= \impartialGameSet{0, \ast 2, \ast 3, 0} \\
        &= \ast\  \\
    \Routes{0}{1+}{2k+4}{2l+5}{3} 
        &= \impartialGameSet{\Routes{0}{1+}{2l+5}{3}{0}, \Routes{0}{1+}{2k+3}{2l+5}{3}, \Routes{0}{1+}{2k+5}{2l+4}{3}, \Routes{0}{1+}{2k+4}{2l+6}{2}}\\
        &= \impartialGameSet{\ast 2, \ast 3, \ast 2, \ast 2} \\
        &= 0\  \\
    \Routes{0}{1+}{2k+5}{2l+4}{3} 
        &= \impartialGameSet{\Routes{0}{1+}{2l+4}{3}{0}, \Routes{0}{1+}{2k+4}{2l+4}{3}, \Routes{0}{1+}{2k+6}{2l+3}{3}, \Routes{0}{1+}{2k+5}{2l+5}{2}}\\
        &= \impartialGameSet{0, \ast, 0, \ast 4} \\
        &= \ast 2\  \\
    \Routes{0}{1+}{2k+5}{2l+5}{3} 
        &= \impartialGameSet{\Routes{0}{1+}{2l+5}{3}{0}, \Routes{0}{1+}{2k+4}{2l+5}{3}, \Routes{0}{1+}{2k+6}{2l+4}{3}, \Routes{0}{1+}{2k+5}{2l+6}{2}}\\
        &= \impartialGameSet{\ast 2, 0,  \ast, \ast} \\
        &= \ast 3\  \\
\end{align*}

Finally, in the case with 4 or more length-six paths also for Table \ref{table:Routes02463} $\Routes{0}{1+}{i}{j}{4+}$:
\begin{align*}
    \Routes{0}{1+}{2k+4}{2l+4}{4+} 
        &= \impartialGameSet{\Routes{0}{1+}{2l+4}{4+}{0}, \Routes{0}{1+}{2k+3}{2l+4}{4+}, \Routes{0}{1+}{2k+5}{2l+3}{4+}, \Routes{0}{1+}{2k+4}{2l+5}{3+}}\\
        &= \impartialGameSet{0, \ast 2, \ast 3, 0} \\
        &= \ast\  \\
    \Routes{0}{1+}{2k+4}{2l+5}{4+} 
        &= \impartialGameSet{\Routes{0}{1+}{2l+5}{4+}{0}, \Routes{0}{1+}{2k+3}{2l+5}{4+}, \Routes{0}{1+}{2k+5}{2l+4}{4+}, \Routes{0}{1+}{2k+4}{2l+6}{3+}}\\
        &= \impartialGameSet{\ast \text{ or } \ast 2, \ast 3, \ast 2, \ast } \\
        &= 0\  \\
    \Routes{0}{1+}{2k+5}{2l+4}{4+} 
        &= \impartialGameSet{\Routes{0}{1+}{2l+4}{4+}{0}, \Routes{0}{1+}{2k+4}{2l+4}{4+}, \Routes{0}{1+}{2k+6}{2l+3}{4+}, \Routes{0}{1+}{2k+5}{2l+5}{3+}}\\
        &= \impartialGameSet{0, \ast, 0, \ast 3} \\
        &= \ast 2\  \\
    \Routes{0}{1+}{2k+5}{2l+5}{4+} 
        &= \impartialGameSet{\Routes{0}{1+}{2l+5}{4+}{0}, \Routes{0}{1+}{2k+4}{2l+5}{4+}, \Routes{0}{1+}{2k+6}{2l+4}{4+}, \Routes{0}{1+}{2k+5}{2l+6}{3}}\\
        &= \impartialGameSet{\ast \text{ or } \ast 2, 0,  \ast, \ast 2} \\
        &= \ast 3\  \\
\end{align*}

For the next case, $\Routes{1}{1+}{i}{j}{0}$ (Table \ref{table:Routes135}) we can do this in one round: 
\begin{align*}
    \Routes{1}{1+}{2k+4}{2l+4}{0} 
        &= \impartialGameSet{\Routes{0}{1+}{2k+4}{2l+4}{0}, \Routes{1}{1+}{2k+3}{2l+4}{0}, \Routes{1}{1+}{2k+5}{2l+3}{0}}\\
        &= \impartialGameSet{0, 0, 0} \\
        &= \ast\  \\
    \Routes{1}{1+}{2k+4}{2l+5}{0} 
        &= \impartialGameSet{\Routes{0}{1+}{2k+4}{2l+5}{0}, \Routes{1}{1+}{2k+3}{2l+5}{0}, \Routes{1}{1+}{2k+5}{2l+4}{0}}\\
        &= \impartialGameSet{0, 0, 0 } \\
        &= \ast\  \\
    \Routes{1}{1+}{2k+5}{2l+4}{0} 
        &= \impartialGameSet{\Routes{0}{1+}{2k+5}{2l+4}{0}, \Routes{1}{1+}{2k+4}{2l+4}{0}, \Routes{1}{1+}{2k+6}{2l+3}{0}}\\
        &= \impartialGameSet{\ast, \ast, \ast} \\
        &= 0\  \\
    \Routes{1}{1+}{2k+5}{2l+5}{0} 
        &= \impartialGameSet{\Routes{0}{1+}{2k+5}{2l+5}{0}, \Routes{1}{1+}{2k+4}{2l+5}{0}, \Routes{1}{1+}{2k+6}{2l+4}{0}}\\
        &= \impartialGameSet{\ast 2, \ast, \ast} \\
        &= 0\  \\
\end{align*}

Third, with the $\Routes{0}{0}{i}{j}{k}$ case, we have to work with increasing numbers of length-six paths again.  We start with $\Routes{0}{0}{i}{j}{0}$, as in Table \ref{table:Routes24}:
\begin{align*}
    \Routes{0}{0}{2k+4}{2l+4}{0} 
        &= \impartialGameSet{\Routes{1}{1+}{2l+4}{0}{0}, \Routes{0}{1+}{2k+3}{2l+4}{0}, \Routes{0}{0}{2k+5}{2l+3}{0}}\\
        &= \impartialGameSet{\ast, \ast, \ast} \\
        &= 0\  \\
    \Routes{0}{0}{2k+4}{2l+5}{0} 
        &= \impartialGameSet{\Routes{1}{1+}{2l+5}{0}{0}, \Routes{0}{1+}{2k+3}{2l+5}{0}, \Routes{0}{0}{2k+5}{2l+4}{0}}\\
        &= \impartialGameSet{0, \ast 2, \ast 2} \\
        &= \ast\  \\
    \Routes{0}{0}{2k+5}{2l+4}{0} 
        &= \impartialGameSet{\Routes{1}{1+}{2l+4}{0}{0}, \Routes{0}{1+}{2k+4}{2l+4}{0}, \Routes{0}{0}{2k+6}{2l+3}{0}}\\
        &= \impartialGameSet{\ast, 0, \ast} \\
        &= \ast 2\  \\
    \Routes{0}{0}{2k+5}{2l+5}{0} 
        &= \impartialGameSet{\Routes{1}{1+}{2l+5}{0}{0}, \Routes{0}{1+}{2k+4}{2l+5}{0}, \Routes{0}{0}{2k+6}{2l+4}{0}}\\
        &= \impartialGameSet{0, 0,0 } \\
        &= \ast\ \ \\
\end{align*}

With 1 length-six path, as in Table \ref{table:Routes2461}, we have $\Routes{0}{0}{i}{j}{1}$:
\begin{align*}
    \Routes{0}{0}{2k+4}{2l+4}{1} 
        &= \impartialGameSet{\Routes{1}{1+}{2l+4}{1}{0}, \Routes{0}{1+}{2k+3}{2l+4}{1}, \Routes{0}{0}{2k+5}{2l+3}{1}, \Routes{0}{0}{2k+4}{2l+5}{0}}\\
        &= \impartialGameSet{\ast, \ast 3, \ast 3, \ast} \\
        &= 0\  \\
    \Routes{0}{0}{2k+4}{2l+5}{1} 
        &= \impartialGameSet{\Routes{1}{1+}{2l+5}{1}{0}, \Routes{0}{1+}{2k+3}{2l+5}{1}, \Routes{0}{0}{2k+5}{2l+4}{1}, \Routes{0}{0}{2k+4}{2l+6}{0}}\\
        &= \impartialGameSet{0, 0, 0, 0} \\
        &= \ast\  \\
    \Routes{0}{0}{2k+5}{2l+4}{1} 
        &= \impartialGameSet{\Routes{1}{1+}{2l+4}{1}{0}, \Routes{0}{1+}{2k+4}{2l+4}{1}, \Routes{0}{0}{2k+6}{2l+3}{1}, \Routes{0}{0}{2k+5}{2l+5}{0}}\\
        &= \impartialGameSet{\ast, \ast, \ast, \ast} \\
        &= \ast 0\  \\
    \Routes{0}{0}{2k+5}{2l+5}{1} 
        &= \impartialGameSet{\Routes{1}{1+}{2l+5}{1}{0}, \Routes{0}{1+}{2k+4}{2l+5}{1}, \Routes{0}{0}{2k+6}{2l+4}{1}, \Routes{0}{0}{2k+5}{2l+6}{0}}\\
        &= \impartialGameSet{0, \ast, 0, \ast 2 } \\
        &= \ast 3\  \\
\end{align*}

For 2 length-six paths in Table \ref{table:Routes2462}, $\Routes{0}{0}{i}{j}{2}$:
\begin{align*}
    \Routes{0}{0}{2k+4}{2l+4}{2} 
        &= \impartialGameSet{\Routes{1}{1+}{2l+4}{2}{0}, \Routes{0}{1+}{2k+3}{2l+4}{2}, \Routes{0}{0}{2k+5}{2l+3}{2}, \Routes{0}{0}{2k+4}{2l+5}{1}}\\
        &= \impartialGameSet{\ast, \ast , \ast , \ast} \\
        &= 0\  \\
    \Routes{0}{0}{2k+4}{2l+5}{2} 
        &= \impartialGameSet{\Routes{1}{1+}{2l+5}{2}{0}, \Routes{0}{1+}{2k+3}{2l+5}{2}, \Routes{0}{0}{2k+5}{2l+4}{2}, \Routes{0}{0}{2k+4}{2l+6}{1}}\\
        &= \impartialGameSet{0, \ast 4, 0, 0} \\
        &= \ast\  \\
    \Routes{0}{0}{2k+5}{2l+4}{2} 
        &= \impartialGameSet{\Routes{1}{1+}{2l+4}{2}{0}, \Routes{0}{1+}{2k+4}{2l+4}{2}, \Routes{0}{0}{2k+6}{2l+3}{2}, \Routes{0}{0}{2k+5}{2l+5}{1}}\\
        &= \impartialGameSet{\ast, \ast 2, \ast, \ast 3} \\
        &= \ast 0\  \\
    \Routes{0}{0}{2k+5}{2l+5}{2} 
        &= \impartialGameSet{\Routes{1}{1+}{2l+5}{2}{0}, \Routes{0}{1+}{2k+4}{2l+5}{2}, \Routes{0}{0}{2k+6}{2l+4}{2}, \Routes{0}{0}{2k+5}{2l+6}{1}}\\
        &= \impartialGameSet{0, 0, 0, 0} \\
        &= \ast\  \\
\end{align*}

For 3 length-six paths in Table \ref{table:Routes2463}, $\Routes{0}{0}{i}{j}{3}$:
\begin{align*}
    \Routes{0}{0}{2k+4}{2l+4}{3} 
        &= \impartialGameSet{\Routes{1}{1+}{2l+4}{3}{0}, \Routes{0}{1+}{2k+3}{2l+4}{3}, \Routes{0}{0}{2k+5}{2l+3}{3}, \Routes{0}{0}{2k+4}{2l+5}{2}}\\
        &= \impartialGameSet{\ast, \ast 2, \ast , \ast} \\
        &= 0\  \\
    \Routes{0}{0}{2k+4}{2l+5}{3} 
        &= \impartialGameSet{\Routes{1}{1+}{2l+5}{3}{0}, \Routes{0}{1+}{2k+3}{2l+5}{3}, \Routes{0}{0}{2k+5}{2l+4}{3}, \Routes{0}{0}{2k+4}{2l+6}{2}}\\
        &= \impartialGameSet{0, \ast 3, 0, 0} \\
        &= \ast\  \\
    \Routes{0}{0}{2k+5}{2l+4}{3} 
        &= \impartialGameSet{\Routes{1}{1+}{2l+4}{3}{0}, \Routes{0}{1+}{2k+4}{2l+4}{3}, \Routes{0}{0}{2k+6}{2l+3}{3}, \Routes{0}{0}{2k+5}{2l+5}{2}}\\
        &= \impartialGameSet{\ast, \ast, \ast, \ast} \\
        &= \ast 0\  \\
    \Routes{0}{0}{2k+5}{2l+5}{3} 
        &= \impartialGameSet{\Routes{1}{1+}{2l+5}{3}{0}, \Routes{0}{1+}{2k+4}{2l+5}{3}, \Routes{0}{0}{2k+6}{2l+4}{3}, \Routes{0}{0}{2k+5}{2l+6}{2}}\\
        &= \impartialGameSet{0, 0, 0, 0} \\
        &= \ast\  \\
\end{align*}

For 4 or more length-six paths, also from Table \ref{table:Routes2463}, $\Routes{0}{0}{i}{j}{4+}$:
\begin{align*}
    \Routes{0}{0}{2k+4}{2l+4}{4+} 
        &= \impartialGameSet{\Routes{1}{1+}{2l+4}{4+}{0}, \Routes{0}{1+}{2k+3}{2l+4}{4+}, \Routes{0}{0}{2k+5}{2l+3}{4+}, \Routes{0}{0}{2k+4}{2l+5}{3+}}\\
        &= \impartialGameSet{\ast, \ast 2, \ast , \ast} \\
        &= 0\  \\
    \Routes{0}{0}{2k+4}{2l+5}{4+} 
        &= \impartialGameSet{\Routes{1}{1+}{2l+5}{4+}{0}, \Routes{0}{1+}{2k+3}{2l+5}{4+}, \Routes{0}{0}{2k+5}{2l+4}{4+}, \Routes{0}{0}{2k+4}{2l+6}{3+}}\\
        &= \impartialGameSet{0, \ast 3, 0, 0} \\
        &= \ast\  \\
    \Routes{0}{0}{2k+5}{2l+4}{4+} 
        &= \impartialGameSet{\Routes{1}{1+}{2l+4}{4+}{0}, \Routes{0}{1+}{2k+4}{2l+4}{4+}, \Routes{0}{0}{2k+6}{2l+3}{4+}, \Routes{0}{0}{2k+5}{2l+5}{3+}}\\
        &= \impartialGameSet{\ast, \ast, \ast, \ast} \\
        &= \ast 0\  \\
    \Routes{0}{0}{2k+5}{2l+5}{4+} 
        &= \impartialGameSet{\Routes{1}{1+}{2l+5}{4+}{0}, \Routes{0}{1+}{2k+4}{2l+5}{4+}, \Routes{0}{0}{2k+6}{2l+4}{4+}, \Routes{0}{0}{2k+5}{2l+6}{3+}}\\
        &= \impartialGameSet{0, 0, 0, 0} \\
        &= \ast\  \\
\end{align*}

In the final case, $\Routes{1}{0}{i}{j}{0}$ as in Table \ref{table:Routes35}, we can again do this in one round: 
\begin{align*}
    \Routes{1}{0}{2k+4}{2l+4}{0} 
        &= \impartialGameSet{\Routes{0}{0}{2k+4}{2l+4}{0}, \Routes{1}{1+}{2k+3}{2l+4}{0}, \Routes{1}{0}{2k+5}{2l+3}{0}}\\
        &= \impartialGameSet{0, 0, 0} \\
        &= \ast\  \\
    \Routes{1}{0}{2k+4}{2l+5}{0} 
        &= \impartialGameSet{\Routes{0}{0}{2k+4}{2l+5}{0}, \Routes{1}{1+}{2k+3}{2l+5}{0}, \Routes{1}{0}{2k+5}{2l+4}{0}}\\
        &= \impartialGameSet{\ast , 0, 0 } \\
        &= \ast 2\  \\
    \Routes{1}{0}{2k+5}{2l+4}{0} 
        &= \impartialGameSet{\Routes{0}{0}{2k+5}{2l+4}{0}, \Routes{1}{1+}{2k+4}{2l+4}{0}, \Routes{1}{0}{2k+6}{2l+3}{0}}\\
        &= \impartialGameSet{\ast 2, \ast, \ast 2} \\
        &= 0\  \\
    \Routes{1}{1+}{2k+5}{2l+5}{0} 
        &= \impartialGameSet{\Routes{0}{0}{2k+5}{2l+5}{0}, \Routes{1}{1+}{2k+4}{2l+5}{0}, \Routes{1}{0}{2k+6}{2l+4}{0}}\\
        &= \impartialGameSet{\ast, \ast, \ast} \\
        &= 0\  \\
\end{align*}
\end{proof}

\end{document}